\newtheorem{theorem}{Theorem}[section]
\newtheorem{lemma}{Lemma}[section]
\newtheorem{proposition}{Proposition}[section]
\newtheorem{definition}{Definition}[section]
\begin{document}
\title{Numerical Solution of the Two-Phase Obstacle Problem by Finite Difference Method}

\author[inst]{Avetik  Arakelyan}
\ead{arakelyanavetik@gmail.com}
\address[inst]{Institute of Mathematics, NAS of Armenia, Baghramyan 24b, 0019 Yerevan, Armenia}

\author[inst]{Rafayel  Barkhudaryan \fnref{fn1,fn2}\corref{cor1}}
\ead{rafayel@instmath.sci.am}

\author[yer]{Michael  Poghosyan \fnref{fn1}}
\ead{michael@ysu.am}
\address[yer]{Department of Mathematics and Mechanics, Yerevan State University, Alex Ma\-noo\-gian 1, 0025, Yerevan, Armenia}

\fntext[fn1]{R.~Barkhudaryan and M.~Poghosyan would like to thank \emph{G\"{o}ran Gustafssons Foundation} for visiting appointment to KTH.}
\fntext[fn2]{R.~Barkhudaryan acknowledges support by NFSAT, CRDF Global, YSSP Grant no. YSSP-13-24.}
\cortext[cor1]{Corresponding author}

\begin{abstract}
{
In this paper we consider the numerical approximation of the \textit{two-phase membrane (obstacle) problem} by finite difference method. First, we introduce the notion of viscosity solution for the problem and construct certain discrete nonlinear approximation system. The existence and uniqueness  of the solution of the discrete  nonlinear system is proved. Based on that scheme, we propose projected Gauss-Seidel algorithm and prove its convergence. At the end of the paper we present some numerical simulations.

}
\end{abstract}

\begin{keyword}
Free Boundary Problem \sep Two-Phase Membrane Problem \sep Two-Phase Obstacle Problem \sep Finite Difference Method
\end{keyword}


\maketitle

\section{Introduction}
\paragraph{\textbf{The Mathematical Setting of the Problem}} Let $\Omega\subset\mathbb R^n$, $n\ge 1$, be a bounded open subset with Lipschitz-regular boundary. Let $g:\partial \Omega\to\mathbb R$ be a continuous function taking both positive and negative values over $\partial\Omega$, and $\lambda^+, \lambda^-:\Omega\to\mathbb R$ are Lipschitz-continuous functions satisfying
$$
\lambda^+ (x)\ge 0,\quad \lambda^-(x)\ge 0,\quad \mbox{and}\quad \lambda^+(x)+\lambda^-(x)>0,\quad x\in \Omega.
$$

The \textit{two-phase obstacle problem}, or the \textit{two-phase membrane problem},  is the problem of  minimization of the cost functional
\begin{equation}\label{eq-1}
\mathcal J(v):=\int_\Omega \left[ \frac{1}{2} |\nabla v|^2 +\lambda^+\max (v,0)+\lambda^-\max (-v,0)\right] dx
\end{equation}
over the set of admissible ``deformations'' $\mathbb K:=\{ v\in H^1(\Omega):\ v-g \in H^1_0(\Omega)\}$.

It is straightforward to see that $\mathcal J$ is coercive, convex and lower-semicontinuous over $H^1(\Omega)$, resulting in the existence of the unique minimum point $u$ of the functional on the affine subspace $\mathbb K\subset H^1(\Omega)$.

Writing down the Euler-Lagrange equation for the minimization problem for the energy functional \eqref{eq-1}, we obtain
\begin{equation}\label{2D-Problem-DifForm}
\left\{
\begin{array}{ll}
\Delta u = \lambda^+\cdot\chi_{\{u>0\}}-\lambda^-\cdot\chi_{\{u<0\}}, & x\in\Omega,\\
u=g, & x\in\partial \Omega,
\end{array}
\right.
\end{equation}
where $\chi_A$ stands for the characteristic function of the set $A$. It is easy to see (cf. \cite{MR1825655}), that the solution (in the weak sense) of \eqref{2D-Problem-DifForm} must coincide with the minimizer $u\in\mathbb K$ of \eqref{eq-1}.

Problem \eqref{2D-Problem-DifForm} is an example of a free boundary problem. Roughly speaking, we need to find a function $u$ satisfying $\Delta u=\lambda^+$ on the set $\{u>0\}$ and $\Delta u=-\lambda^-$ on $\{u<0\}$ and which is $C^{1,\alpha}$ across $\partial\{u>0\}\cup\partial\{u<0\}$. The sets $\{u>0\}$ and $\{u<0\}$, the two {\em phases} for this problem, are not known a priori, and need to be determined along with the solution $u$. So the free boundary for this problem consist of two parts- $\partial \{u>0\}\cap\Omega$ and $\partial  \{u<0\}\cap\Omega$.

\paragraph{\textbf{Physical interpretation and known results}} The problem of minimization of the functional \eqref{eq-1} arises in connection with describing the equilibrium state of a hanging membrane in the two-phase matter with different gravitation densities (say, in water and air), assuming the membrane is fixed on the boundary of a given domain. If the density of the membrane is between the densities of two matters, then the membrane is being buoyed up in the phase with higher density and pulled down in the phase with lower density, and the equilibrium state is described by minimization of the energy functional \eqref{eq-1}. In that case $\lambda^+$ is proportional to the difference between the densities of high-density matter and membrane, and $\lambda^-$ is proportional to the difference between the densities of membrane and low-density matter.

In the case of nonnegative  $g$, one can prove that $u\ge 0$ over $\Omega$, resulting $u$ to be the solution of \textit{one-phase obstacle problem} or the \textit{classical obstacle problem}, which has been extensively studied in the literature. Here we assume that $g$ takes both positive and negative values across the boundary, forcing our problem to have two phases.

The \textit{two-phase obstacle problem} \eqref{2D-Problem-DifForm} has been studied from different viewpoints. As it has been mentioned above, the existence of minimizers is straightforward and is obtained by the direct methods of calculus of variations. The optimal $C^{1,1}_{loc}$ regularity for the solution to \eqref{2D-Problem-DifForm} has been proved in \cite{MR1906034} for constant coefficients $\lambda^\pm$, and the result was extended in \cite{MR1934623} for Lipschitz-regular  $\lambda^\pm$ and in \cite{MR2569898} for H\"older-regular $\lambda^\pm$. The regularity and the geometry of the free boundary has been studied in \cite{MR2340105}, \cite{MR2258264},  \cite{MR2237208}.

Concerning the numerical solution of  the two-phase obstacle problem, in his recent paper \cite{Farid} Bozorgnia discussed three algorithms for numerical solution of two-phase obstacle problem. The first algorithm constructs an iterative sequence converging towards the solution. The second algorithm uses the regularization method to construct an approximation for the solution, and the third is based on Finite Element Method. But here the first and the third methods lack of convergence proofs, and only for the second method the estimates for the difference between the regularized solutions and exact solution are given.

Our main aim in this paper is to construct a Finite Difference approximation for the two-phase obstacle problem and to prove the convergence of the proposed algorithm.

In this paper we use the regularization method to obtain a smooth approximation for two-phase obstacle problem, approximate the latter by Finite Difference Scheme (FDS), and solve the obtained nonlinear system by means of PGS (Projected Gauss-Seidel) method.

\section{Construction of the finite difference scheme}

We start this section by recalling the definition of the viscosity solutions of fully nonlinear second order elliptic differential equations, then we give the reformulation of the differential equation in \eqref{2D-Problem-DifForm} as fully nonlinear equation, which we will refer to as the \textit{Min-Max form of the two-phase obstacle problem}. Using this representation, in the last subsection we construct the corresponding Finite Difference Scheme and prove the existence and uniqueness of the solution to this discrete problem.

\subsection{Degenerate elliptic equations and viscosity solutions}
Let $\Omega$ be an open subset of $\mathbb{R}^n$, and for twice differentiable function $u:\Omega\to\mathbb R$ let $Du$ and $D^2 u$ denote the gradient and Hessian matrix of $u$, respectively. Also let the function $F(x, r, p, X)$ be a continuous real-valued function defined on $\Omega\times\mathbb{R}\times\mathbb{R}^n\times S^n$, with $S^n$ being the space of real symmetric $n\times n$ matrices. Denote
$$
\mathcal F[u](x)\equiv F\left(x, u(x), Du(x), D^2u(x)\right).
$$

We consider the following second order fully nonlinear partial differential equation:
\begin{equation}\label{PDE}
\mathcal F[u](x) = 0, \qquad   x\in\Omega.
\end{equation}

\begin{definition}
The equation \eqref{PDE} is \textbf{degenerate elliptic} if
$$
 F(x, r, p,X)\leq F(x, s, p, Y )  \quad\mbox{whenever }\quad  r\leq s  \quad\mbox{and }\quad Y \leq X,
$$
where $Y \leq X$ means that $X - Y$ is a nonnegative definite symmetric matrix.
\end{definition}

\begin{definition}
$u:\Omega\to\mathbb R$ is called a \textbf{viscosity subsolution} of \eqref{PDE}, if it is upper semicontinuous and for each $\varphi\in C^2(\Omega)$ and local maximum point $x_0\in \Omega$ of $u-\varphi$ we have
\begin{equation}\label{2D-def-Visc-Subsol}
F\left(x_0, u(x_0), D\varphi(x_0), D^2 \varphi(x_0)\right)\le 0.
\end{equation}
\end{definition}

\begin{definition}
$u:\Omega\to\mathbb R$ is called a \textbf{viscosity supersolution} of \eqref{PDE}, if it is lower semicontinuous and for each $\varphi\in C^2(\Omega)$ and local minimum point $x_0\in \Omega$ of $u-\varphi$ we have
$$
F\left(x_0, u(x_0), D\varphi(x_0), D^2 \varphi(x_0)\right)\ge 0.
$$
\end{definition}
\begin{definition}
$u:\Omega\to\mathbb R$ is called a \textbf{viscosity solution} of \eqref{PDE}, if it is both a viscosity subsolution and supersolution (and hence continuous) for \eqref{PDE}.
\end{definition}

The notion of viscosity solution was first introduced in 1981 by Crandall and Lions (see \cite{MR690039} and \cite{MR732102}) for first order Hamilton-Jacobi equations. It turns out that this notion is an effective tool also in the study of second order (elliptic and parabolic) fully nonlinear problems. There is a vast literature devoted to viscosity solutions by now, and for a general theory the reader is referred to \cite{MR1118699}, \cite{MR1351007} and references therein.

\subsection{Min-Max reformulation of the problem}

Now we consider the following nonlinear problem, which we will refer as the \textit{Min-Max form of the two-phase obstacle problem}:

\begin{equation}\label{2D-Problem-Viscosity}
\left\{
\begin{array}{ll}
\min\left(-\Delta u + \lambda^+ ,\max(-\Delta u-\lambda^-,u)\right)=0,& \mbox{in}\ \Omega\\
u=g,& \mbox{on}\ \partial\Omega.
\end{array}
\right.
\end{equation}

If we introduce a function $F:\Omega\times\mathbb{R}\times\mathbb{R}^n\times S^n\to\mathbb R$ by
$$
F(x, r, p,X)=\min(-trace(X) + \lambda^+ ,\max(-trace(X)-\lambda^-,r)),
$$
then the equation in \eqref{2D-Problem-Viscosity} can be rewritten as
\begin{equation}\label{2D-F}
\mathcal F[u](x)=F(x, u, Du, D^2 u)=0\quad \mbox{in}\ \ \Omega,
\end{equation}
and by \textbf{solution to \eqref{2D-Problem-Viscosity}} we mean a function $u\in C(\overline\Omega)$ which is a viscosity solution to \eqref{2D-F} in the sense defined above and satisfies $u=g$ along the boundary $\partial\Omega$.

First we prove the following simple

\begin{lemma}\label{2D-lem-degelliptic}
The equation \eqref{2D-F} is degenerate elliptic.
\end{lemma}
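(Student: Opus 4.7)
The definition of degenerate ellipticity only constrains the behavior of $F$ in the $r$ and $X$ slots, and the $F$ in question has a very simple structure there: it depends on $X$ only through $-\mathrm{trace}(X)$ and on $r$ monotonically inside a nested $\min$/$\max$. So the plan is a direct verification — no heavy machinery needed.

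First I would reduce the hypothesis $Y \le X$ on symmetric matrices to a scalar inequality. Since $X - Y$ is positive semidefinite, $\mathrm{trace}(X-Y) \ge 0$, hence $-\mathrm{trace}(X) \le -\mathrm{trace}(Y)$. Consequently
$$
-\mathrm{trace}(X) + \lambda^+(x) \le -\mathrm{trace}(Y) + \lambda^+(x)
\quad\text{and}\quad
-\mathrm{trace}(X) - \lambda^-(x) \le -\mathrm{trace}(Y) - \lambda^-(x).
$$

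Next I would use the elementary monotonicity of the lattice operations: if $a \le a'$ and $b \le b'$ then both $\max(a,b) \le \max(a',b')$ and $\min(a,b) \le \min(a',b')$. Combining this with the two trace inequalities above and with the hypothesis $r \le s$, applied first to the inner $\max(-\mathrm{trace}(X)-\lambda^-, r)$ and then to the outer $\min$, yields
$$
F(x,r,p,X) = \min\!\bigl(-\mathrm{trace}(X)+\lambda^+,\,\max(-\mathrm{trace}(X)-\lambda^-,\,r)\bigr) \le \min\!\bigl(-\mathrm{trace}(Y)+\lambda^+,\,\max(-\mathrm{trace}(Y)-\lambda^-,\,s)\bigr) = F(x,s,p,Y),
$$
which is exactly the required inequality.

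There is no real obstacle here; the only thing worth flagging is the correct direction of the inequalities — one must be careful that the minus sign in front of $\mathrm{trace}(X)$ reverses the matrix ordering so that the dependence on $X$ is indeed monotone in the sense required by the definition, and that $r$ appears on the ``right'' side of the nested $\min$/$\max$ so that increasing $r$ can only increase $F$. Both are visible from inspection, so the proof amounts to these few lines.
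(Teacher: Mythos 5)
Your proof is correct and follows essentially the same route as the paper's: reduce $Y\le X$ to $\mathrm{trace}(X-Y)\ge 0$, hence $-\mathrm{trace}(X)\le -\mathrm{trace}(Y)$, and then propagate the inequalities through the inner $\max$ and outer $\min$ by their monotonicity. The only difference is that you spell out the lattice monotonicity step explicitly, which the paper leaves implicit.
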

\begin{proof} Let $X,Y\in S^n$ and $r,s\in\mathbb R$ satisfy $Y \leq X $ and $r\leq s$. Then
\begin{multline*}
-trace(X)+\lambda^+\leq-trace(Y)+\lambda^+, \quad {\rm and}\\ \max(-trace(X)-\lambda^-,r)\le \max(-trace(Y)-\lambda^-,s).
\end{multline*}
Therefore
\begin {align*}
F(x, r, p,X) &=\min(-trace(X) + \lambda^+ ,\max(-trace(X)-\lambda^-,r))\\
             &\leq\min(-trace(Y) + \lambda^+ ,\max(-trace(Y)-\lambda^-,s))=F(x, s, p,Y).
\end{align*}
\end{proof}

The next Proposition shows the connection between problems \eqref{2D-Problem-Viscosity} and \eqref{2D-Problem-DifForm}.

\begin{proposition} If $u$ is the solution (in the weak sense) to \eqref{2D-Problem-DifForm}, then it is a viscosity solution to \eqref{2D-Problem-Viscosity}. Moreover, $u$ satisfies \eqref{2D-Problem-Viscosity} a.e.
\end{proposition}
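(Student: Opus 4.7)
The plan is to use the $C^{1,1}_{loc}$ regularity of the weak solution (cited in the Introduction), which gives $u\in W^{2,\infty}_{loc}(\Omega)$, so that $\Delta u$ exists pointwise a.e.\ and lies in $L^\infty_{loc}$. From \eqref{2D-Problem-DifForm} it then follows that $\Delta u=\lambda^+\chi_{\{u>0\}}-\lambda^-\chi_{\{u<0\}}$ a.e.; in particular $\Delta u=0$ a.e.\ on the coincidence set $\{u=0\}$, by the standard Sobolev fact that $D^2 u=0$ a.e.\ on $\{u=0\}$.

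For the pointwise a.e.\ verification of \eqref{2D-Problem-Viscosity}, I would split into the three cases $u>0$, $u<0$, and $u=0$, and plug in the corresponding value of $\Delta u$. Using $\lambda^+,\lambda^-\ge 0$ and $\lambda^++\lambda^->0$, a one-line calculation in each case shows that both the inner $\max$ and the outer $\min$ evaluate to $0$.

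For the viscosity statement, the key observation is that $-\lambda^-\le \Delta u\le \lambda^+$ holds a.e.\ in $\Omega$, and because $u\in W^{2,\infty}_{loc}$, by the standard correspondence that strong solutions of linear elliptic inequalities are viscosity sub/supersolutions (see, e.g., Caffarelli--Cabr\'e), $u$ is a viscosity subsolution of $-\Delta u-\lambda^-\le 0$ and a viscosity supersolution of $-\Delta u+\lambda^+\ge 0$ on all of $\Omega$. Given $\varphi\in C^2$ and $x_0$ a local maximum of $u-\varphi$, I split on the sign of $u(x_0)$. If $u(x_0)>0$, a neighborhood of $x_0$ lies in the open set $\{u>0\}$, where $u$ solves $\Delta u=\lambda^+$ classically (elliptic regularity with Lipschitz $\lambda^+$), and the usual touching argument yields $-\Delta\varphi(x_0)+\lambda^+(x_0)\le 0$, forcing the outer $\min\le 0$. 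If $u(x_0)\le 0$, the subsolution property for $-\Delta u-\lambda^-$ yields $-\Delta\varphi(x_0)-\lambda^-(x_0)\le 0$; combined with $u(x_0)\le 0$, the inner $\max$ is $\le 0$, and again the outer $\min\le 0$. The supersolution check at a local minimum is entirely symmetric, using $-\Delta u+\lambda^+\ge 0$ globally together with classical regularity on the open set $\{u<0\}$.

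The main obstacle I anticipate is the passage from a.e.\ Laplacian bounds to pointwise viscosity inequalities at free boundary points, where $u$ fails to be $C^2$. I would appeal to the cited general correspondence between strong and viscosity solutions; alternatively, a self-contained argument proceeds by perturbing $\varphi$ to $\varphi\pm\epsilon|x-x_0|^2$ to make the extremum strict, integrating the a.e.\ inequality over shrinking balls around $x_0$, and passing to the limit using the continuity of $\lambda^\pm$.
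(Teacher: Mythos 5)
Your proof is correct and follows essentially the same route as the paper: both derive the viscosity inequalities from the distributional bounds $-\lambda^-\le\Delta u\le\lambda^+$ (interpreted in the viscosity sense) together with classical regularity of $u$ in the open phases $\{u>0\}$ and $\{u<0\}$, and then split on the sign of $u(x_0)$ at the touching point, using degenerate ellipticity. The only differences are minor: you merge the cases $u(x_0)<0$ and $u(x_0)=0$ in the subsolution check (which works and is slightly cleaner), and you supply an explicit $W^{2,\infty}_{loc}$ argument for the a.e.\ statement where the paper simply cites the literature.
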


\begin{proof} Let $u$ be a weak solution of the two-phase obstacle problem \eqref{2D-Problem-DifForm} (we refer to \cite{MR1825655} for the definition of the weak solution). Then  $u$ satisfies the following inequality in the sense of distributions
$$
-\lambda^-\leq\Delta u \leq\lambda^+\quad {\rm in}\quad \Omega,
$$
and hence, the same inequality will be true also in the viscosity sense (see \cite{MR1341739}), in the sense that $u$ is a viscosity subsolution for
the equation $-\Delta v-\lambda^-=0$ and viscosity supersolution for $-\Delta v +\lambda^+=0$.

Let $x_0\in\Omega$ and $\varphi\in C^2(\Omega)$ are such that $x_0$ is a local maximum point of $u-\varphi$. To verify \eqref{2D-def-Visc-Subsol}, we consider two different cases:
\begin{itemize}
\item{$x_0\in\{u>0\}\cup\{u<0\}$}. In this case the solution will be $C^2$ smooth in some neighborhood of $x_0$, and it will satisfy \eqref{2D-Problem-DifForm} in a classical sense. So if we assume, without loss of generality, that $x_0\in\{u>0\}$, then we'll have
    $$-\Delta u (x_0) +\lambda^+(x_0)=0$$
    in a classical sense. On the other hand, by our assumption,
    $$
    \max(-\Delta u(x_0)-\lambda^-(x_0),u(x_0))>0,
    $$
    so
    \begin{multline*}
    F(x_0, u(x_0), Du(x_0), D^2u(x_0))=\\
    \min(-\Delta u (x_0) +\lambda^+(x_0),\max(-\Delta u(x_0)-\lambda^-(x_0),u(x_0)))=0.
    \end{multline*}
    Now, since $x_0$ is a local maximum point of $u-\varphi$, and $u-\varphi\in C^2$ in a neighborhood of $x_0$, then $D^2 (u-\varphi)(x_0)\le 0$, i.e. $D^2u(x_0)\le D^2 \varphi(x_0)$, and, using the result of  Lemma \ref{2D-lem-degelliptic}, we'll obtain
    $$
    F(x_0, u(x_0), D\varphi (x_0), D^2\varphi(x_0))\le F(x_0, u(x_0), Du(x_0), D^2u(x_0))=0.
    $$

\item{$x_0\in\{u=0\}$}. Then, as in the previous case, $u$ is a subsolution for $-\Delta v -\lambda^-=0$. Now if $x_0$ is a local maximum point for $u-\varphi$ for some $\varphi\in C^2$, then
$$-\Delta\varphi(x_0)-\lambda^-(x_0)\le 0.$$
Hence,
\begin{multline*}
F(x_0, u(x_0), D\varphi (x_0), D^2\varphi(x_0))=\\
\min(-\Delta \varphi (x_0) +\lambda^+(x_0),\max(-\Delta \varphi(x_0)-\lambda^-(x_0),u(x_0)))=\\
\min(-\Delta \varphi (x_0) +\lambda^+(x_0),\max(-\Delta \varphi(x_0)-\lambda^-(x_0),0))=\\
\min(-\Delta \varphi (x_0) +\lambda^+(x_0),0)\le 0.
\end{multline*}
\end{itemize}
Thus, we have proved that $u$ is a viscosity subsolution for \eqref{2D-Problem-Viscosity}. Analogously we can obtain that $u$ is also a viscosity supersolution for \eqref{2D-Problem-Viscosity}.

For the proof that $u$ satisfies \eqref{2D-Problem-Viscosity} a.e. we refer to \cite{MR1825655}.
\end{proof}

\subsection{FDS, existence and uniqueness of discrete solution}

Now we are going to construct a Finite Difference Scheme (FDS) for one- and two-dimensional two-phase obstacle problems based on its  Min-Max form \eqref{2D-Problem-Viscosity}. For the sake of simplicity, we will assume that $\Omega=(-1,1)$ in one-dimensional case and $\Omega=(-1,1)\times(-1,1)$ in two-dimensional case in the rest of the paper, keeping in mind that the method works also for more complicated domains.

Let $N\in\mathbb{N}$ be a positive integer, $h=2/N$ and
$$
x_i=-1+ ih,\, y_i=-1+ih,\quad i=0,1,...,N.
$$

We are interested in computing approximate values of the two-phase obstacle problem solution at the grid points $x_i$ or $(x_i, y_j)$ in one- and two-dimensional cases, respectively. We will develop the one-dimensional and two-dimensional cases parallelly in this section, hoping that the same notations for this two cases will not make confusion for reader.  We use the notation $u_i$ and $u_{i,j}$ (or simply $u_\alpha$, where $\alpha$ is one- or two-dimensional multi-index) for finite-difference scheme approximation to $u(x_i)$ and $u(x_i,y_j)$, $\lambda^\pm_i=\lambda^\pm(x_i)$ and $\lambda^\pm_{i,j}=\lambda^\pm(x_i,y_j)$, $g_i=g(x_i)$ and $g_{i,j}=g(x_i, y_j)$ in one- and two-dimensional cases, respectively, assuming that the functions $g$ and $\lambda^\pm$ are extended to be zero everywhere outside the boundary $\partial\Omega$ and outside $\Omega$, respectively. In this section we will use also notations $u=(u_{\alpha})$, $g=(g_{\alpha})$ and $\lambda^\pm=(\lambda^\pm_{\alpha})$ (not to be confused with functions $u, g$ and $\lambda^\pm$). Also we will write $(a_{\alpha})\le (b_{\alpha})$ in $\mathcal I$ if $a_{\alpha}\le b_{\alpha}$ for all $\alpha\in\mathcal I$.

Denote
$$
 \mathcal N=\{i:\ 0\leq i\leq N\}\quad\mbox{or}\quad \mathcal N=\{(i,j):\ 0\leq i,j\leq N\},
$$
$$
  \mathcal N^o=\{i:\ 1\leq i\leq N-1\} \quad\mbox{or}\quad  \mathcal N^o=\{(i,j):\ 1\leq i,j\leq N-1\},
$$
in one- and two- dimensional cases, respectively, and
$$
  \partial \mathcal N=\mathcal N \setminus \mathcal N^o.
$$

In one-dimensional case we consider the following approximation for Laplace operator: for any $i\in \mathcal N^o$,
$$
  L_h u_{i}=\frac{u_{i-1}-2u_{i}+u_{i+1}}{h^2},
$$
and for two-dimensional case we introduce the following 5-point stencil approximation for Laplacian:
$$
  L_h u_{i,j}=\frac{u_{i-1,j}+u_{i+1,j}-4u_{i,j}+u_{i,j-1}+u_{i,j+1}}{h^2}
$$
for any $(i,j)\in \mathcal N^o$.

Applying the finite difference method to \eqref{2D-Problem-Viscosity}, we obtain the following nonlinear system:

\begin{equation}\label{nonlinprob}
\begin{cases}
 \min(-L_h u_{\alpha} + \lambda^+_{\alpha}\, ,\, \max(-L_h u_{\alpha}-\lambda^-_{\alpha}\, ,\,  u_{\alpha}))=0, &\alpha\in \mathcal N^o,\\[6pt]
 u_{\alpha}=g_{\alpha}, &\alpha \in \partial \mathcal N.
\end{cases}
\end{equation}

It is not clear a priori, whether this system has a solution, or, in the case of existence, this solution is unique. To this end, we consider the following functional:
$$
J_h(v)=-\frac{1}{2} \Big(L_h v,v\Big)+\Big(\lambda^+, v \vee 0\Big)-\Big(\lambda^-,v\wedge 0\Big)-\Big(L_h g,v\Big),
$$
defined on the finite dimensional space
$$
\mathcal K=\{v\in\mathcal H: \ v_{\alpha}=0, \ \alpha\in\partial \mathcal N\},\quad {\rm where}\quad
\mathcal H=\{v=(v_\alpha): v_\alpha\in \mathbb R, \ \alpha\in \mathcal N\}.
$$
Here $v\vee 0=\max(v,0)$, $v\wedge 0=\min(v,0)$ and for $w=(w_\alpha)$ and $v=(v_\alpha)$, $\alpha\in \mathcal N$, the inner product $(\cdot, \cdot)$ is defined by
$$
  (w,v)=\sum_{\alpha\in \mathcal N}w_\alpha\cdot v_\alpha.
$$

\begin{lemma}
The element $u\in\mathcal H$ solves \eqref{nonlinprob} if and only if $\tilde u=u-g$ solves the following minimization problem:
\begin{equation}\label{minprob}
    \tilde u\in \mathcal K:\qquad J_h(\tilde u)=\min_{v\in \mathcal K}J_h(v).
\end{equation}
\end{lemma}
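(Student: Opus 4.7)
The plan is to reduce the claim to the standard correspondence between a strictly convex minimization problem over a finite-dimensional subspace and its first-order optimality condition. Write $J_h = Q + L + S$ with the smooth quadratic $Q(v) = -\tfrac12(L_h v, v)$, the smooth linear term $L(v) = -(L_h g, v)$, and the separable nonsmooth piece $S(v) = \sum_{\alpha\in\mathcal N^o}\bigl[\lambda^+_\alpha (v_\alpha)_+ + \lambda^-_\alpha (v_\alpha)_-\bigr]$. I would first establish via discrete summation by parts that, for $v\in\mathcal K$, $-(L_h v,v)$ equals a sum of squared discrete differences and is strictly positive unless $v\equiv 0$; so $-L_h$ is symmetric and positive definite on $\mathcal K$, $Q$ is strictly convex, $L$ is affine, and $S$ is convex and piecewise linear. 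Consequently $J_h$ is strictly convex and coercive on the finite-dimensional space $\mathcal K$, a unique minimizer exists, and convexity gives the standard characterization: $\tilde u\in\mathcal K$ is that minimizer if and only if $0\in\partial J_h(\tilde u)$.

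Because $S$ is separable in the scalar variables $v_\alpha$, the subdifferential inclusion decouples into one scalar condition per interior node $\alpha\in\mathcal N^o$. By symmetry of $L_h$ and the convention that $g_\alpha = 0$ at interior nodes, the smooth contribution to $\partial_{v_\alpha} J_h(\tilde u)$ evaluates to $-(L_h\tilde u)_\alpha - (L_h g)_\alpha = -(L_h u)_\alpha$, where $u := \tilde u + g$ and $u_\alpha = \tilde u_\alpha$ at every interior node. The subdifferential of $s\mapsto \lambda^+_\alpha s_+ + \lambda^-_\alpha s_-$ equals $\{\lambda^+_\alpha\}$ when $s>0$, $\{-\lambda^-_\alpha\}$ when $s<0$, and the interval $[-\lambda^-_\alpha,\lambda^+_\alpha]$ when $s=0$. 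Combining, the optimality relation at $\alpha$ reads $(L_h u)_\alpha \in \partial f_\alpha(u_\alpha)$, with $f_\alpha(s) := \lambda^+_\alpha s_+ + \lambda^-_\alpha s_-$.

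To finish, I would verify that this pointwise subdifferential inclusion is exactly the min-max equation in \eqref{nonlinprob}, by a three-way case analysis on the sign of $u_\alpha$. When $u_\alpha > 0$ the inner $\max$ is automatically positive, so \eqref{nonlinprob} collapses to $-L_h u_\alpha + \lambda^+_\alpha = 0$, matching $(L_h u)_\alpha = \lambda^+_\alpha$. When $u_\alpha < 0$, the standing assumption $\lambda^+_\alpha + \lambda^-_\alpha > 0$ forces $-L_h u_\alpha -\lambda^-_\alpha = 0$. When $u_\alpha = 0$, the equation becomes the two-sided bound $-\lambda^-_\alpha \le (L_h u)_\alpha \le \lambda^+_\alpha$. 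These three cases are precisely the three branches of $\partial f_\alpha$, and the boundary identity matches trivially since $\tilde u\in\mathcal K$ encodes $u_\alpha = g_\alpha$ on $\partial\mathcal N$.

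The step that requires the most care is setting up the discrete Green's identity so that the gradient of the quadratic-plus-linear part of $J_h$ really equals $-L_h u$ at \emph{every} interior node, including nodes adjacent to $\partial\mathcal N$ where the inhomogeneity carried by $g$ enters through the stencil; once that symmetry-and-boundary bookkeeping is nailed down, the remaining equivalence between $(L_h u)_\alpha \in \partial f_\alpha(u_\alpha)$ and the min-max form is a routine algebraic exercise in the three sign regimes for $u_\alpha$.
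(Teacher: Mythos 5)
Your argument is correct, and it reaches the same pointwise conditions the paper derives, but by a genuinely different route. The paper proves the two implications separately and by hand: for the ``minimizer $\Rightarrow$ system'' direction it forms the one-sided increment $J_h(\tilde u+tw)-J_h(\tilde u)\ge 0$ for small $t>0$ and extracts the three sign-dependent conditions by plugging in specific perturbations $w$ supported at a single node (namely $w_{\alpha_0}=\pm u_{\alpha_0}$ and $w_{\alpha_0}=\pm 1$); for the converse it expands $J_h(v)-J_h(\tilde u)$, discards the nonnegative quadratic term, and verifies nonnegativity of the remaining first-order expression by splitting the sum over the sets $\{u_\alpha<0\}$, $\{u_\alpha>0\}$, $\{u_\alpha=0\}$. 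You instead invoke the convex-analysis characterization ``$\tilde u$ minimizes $J_h$ iff $0\in\partial J_h(\tilde u)$,'' use the sum rule and the separability of the nonsmooth part to decouple the inclusion into the scalar conditions $(L_h u)_\alpha\in\partial f_\alpha(u_\alpha)$, and then match these with the min--max equation by the same three-case analysis. This buys you both implications at once and makes the uniqueness of the minimizer (strict convexity of $Q$ on $\mathcal K$) fall out for free, at the cost of relying on the subdifferential machinery that the paper's elementary computation avoids; the paper's hand-picked test vectors and its term-by-term verification of the subgradient inequality are exactly the concrete content hiding inside your appeal to $\partial J_h$. Two small points you correctly flag and should keep: the identity $-(L_h\tilde u)_\alpha-(L_h g)_\alpha=-(L_h u)_\alpha$ at interior nodes adjacent to $\partial\mathcal N$ (this is where $g$ enters the stencil), and the use of $\lambda^+_\alpha+\lambda^-_\alpha>0$ in the case $u_\alpha<0$ to force the inner branch $-L_hu_\alpha-\lambda^-_\alpha=0$ rather than the outer one.
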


\begin{proof}
Suppose $ \tilde u\in \mathcal K$ solves \eqref{minprob}. We choose arbitrary $w=(w_\alpha)\in\mathcal K$ and $t>0$, and denote $v=\tilde u+tw$. Obviously,  $ v\in \mathcal K$. It follows that
\begin{multline*}
 J_h(v)-J_h(\tilde u)= -\frac{t^2}{2}(L_hw,w)-t(L_h(\tilde u+g),w)+\\+(\lambda^+,(\tilde u+tw)\vee 0-\tilde u \vee 0)-(\lambda^-,(\tilde u+tw)\wedge 0-\tilde u\wedge 0)\geq 0.
\end{multline*}

Now, since $t$ is arbitrary positive number, we can conclude that
\begin{equation}\label{2P-eq-Lemma1.2-1}
  -t(L_h u,w)+(\lambda^+,(\tilde u+tw)\vee 0 - \tilde u \vee 0)-(\lambda^-,(\tilde u+tw)\wedge 0-\tilde u\wedge 0)\geq 0,
\end{equation}
if $t>0$ is sufficiently small.

To prove that $u$ satisfies \eqref{nonlinprob}, we treat several cases. First assume that $u_{\alpha_0}<0$ for some  ${\alpha_0}\in \mathcal N^o$.

By taking $w_{\alpha_0}=u_{\alpha_0}=\tilde u_{\alpha_0}$ and $w_\alpha=0$ for $\alpha\neq{\alpha_0}$ and substituting into \eqref{2P-eq-Lemma1.2-1}, we'll obtain
$$
  (-L_hu_{\alpha_0}-\lambda_{\alpha_0}^-)u_{\alpha_0}\geq 0.
$$
Now if we take $w_{\alpha_0}=-u_{\alpha_0}=-\tilde u_{\alpha_0}$ and  $w_\alpha=0$ for $\alpha\neq{\alpha_0}$, we'll
get from \eqref{2P-eq-Lemma1.2-1}
that $(-L_hu_{\alpha_0}-\lambda_{\alpha_0}^-)u_{\alpha_0}\leq 0.$ Hence,
\begin{equation}\label{2P-eq-Lemma1.2-2}
  -L_hu_{\alpha_0}=\lambda_{\alpha_0}^- , \quad \mbox{if} \quad u_{\alpha_0}<0.
\end{equation}
In the same way we can prove that
\begin{equation}\label{2P-eq-Lemma1.2-3}
  -L_hu_{\alpha_0}=-\lambda_{\alpha_0}^+ , \quad \mbox{if} \quad u_{\alpha_0}>0.
\end{equation}
Next we show that if $u_{\alpha_0}=0$ for some  ${\alpha_0}\in \mathcal N^o$, then
\begin{equation}\label{2P-eq-Lemma1.2-4}
-\lambda_{\alpha_0}^+\leq -L_hu_{\alpha_0}\leq \lambda_{\alpha_0}^-.
\end{equation}
Clearly, if we take in \eqref{2P-eq-Lemma1.2-1} $w_{\alpha_0}=1$ and $w_\alpha=0$, for $\alpha\neq{\alpha_0}$, we'll get
$  -L_hu_{\alpha_0}+\lambda_{\alpha_0}^+\geq 0,$ and if we take $w_{\alpha_0}=-1$ and  $w_\alpha=0$, for $\alpha\neq{\alpha_0}$, we'll get
$L_hu_{\alpha_0}+\lambda_{\alpha_0}^-\geq 0.$ Now, combining \eqref{2P-eq-Lemma1.2-2}, \eqref{2P-eq-Lemma1.2-3} and \eqref{2P-eq-Lemma1.2-4}, we conclude that $u$ satisfies \eqref{nonlinprob}.

Conversely, let $u\in \mathcal H$ satisfies \eqref{nonlinprob}. To prove that $\tilde u=u-g\in \mathcal K$ solves \eqref{minprob}, we take arbitrary $v\in \mathcal K$ and write
\begin{equation}\label{2P-eq-Lemma1.2-5}
  J_h(v)-J_h(\tilde u)=-\frac{1}{2}(L_h(v-\tilde u),v-\tilde u)-(L_h u,v-\tilde u)+(\lambda^+,v\vee 0-\tilde u\vee 0)-(\lambda^-,v\wedge 0 -\tilde u\wedge 0).
\end{equation}
It is well known fact that $\ -(L_h w, w)\geq 0$ for all $w\in\mathcal K$, so the first term in the right-hand side of \eqref{2P-eq-Lemma1.2-5} is nonnegative, and in order to prove our assertion, it is sufficient to prove that
\begin{equation}\label{eqrev}
-(L_h u,v-\tilde u)+(\lambda^+,v\vee 0-\tilde u\vee 0)-(\lambda^-,v\wedge 0 -\tilde u\wedge 0)\geq 0, \quad\forall v\in\mathcal K.
\end{equation}
To this end, we write
\begin{multline*}
-(L_h u,v-\tilde u)+(\lambda^+,v\vee 0-\tilde u\vee 0)-(\lambda^-,v\wedge 0 -\tilde u\wedge 0)= \\=\sum_{\alpha\in \mathcal N^o} \Big(-L_hu_\alpha\cdot (v_\alpha- u_\alpha)+\lambda^+_\alpha\cdot(v_\alpha\vee 0-u_\alpha\vee 0) - \lambda^-_\alpha\cdot(v_\alpha\wedge 0-u_\alpha\wedge 0)\Big)=\\ =\sum_{\{\alpha\in\mathcal N^o, u_\alpha<0\}} \Big(-L_hu_\alpha\cdot (v_\alpha- u_\alpha)+\lambda^+_\alpha\cdot(v_\alpha\vee 0-u_\alpha\vee 0) - \lambda^-_\alpha\cdot(v_\alpha\wedge 0-u_\alpha\wedge 0)\Big)+\\+\sum_{\{\alpha\in\mathcal N^o, u_\alpha>0\}} \Big(-L_hu_\alpha\cdot (v_\alpha- u_\alpha)+\lambda^+_\alpha\cdot(v_\alpha\vee 0-u_\alpha\vee 0) - \lambda^-_\alpha\cdot(v_\alpha\wedge 0-u_\alpha\wedge 0)\Big)+\\+\sum_{\{\alpha\in\mathcal N^o, u_\alpha=0\}} \Big(-L_hu_\alpha\cdot (v_\alpha- u_\alpha)+\lambda^+_\alpha\cdot(v_\alpha\vee 0-u_\alpha\vee 0) - \lambda^-_\alpha\cdot(v_\alpha\wedge 0-u_\alpha\wedge 0)\Big).
\end{multline*}

Since $u$ satisfies \eqref{nonlinprob}, we have
\begin{eqnarray*}
  -L_hu_\alpha=\lambda_\alpha^-, \quad &\text{when}& \quad u_\alpha<0, \\
  -L_hu_\alpha=-\lambda_\alpha^+, \quad &\text{when}& \quad u_\alpha>0, \\
  -\lambda_\alpha^+\leq -L_hu_\alpha\leq \lambda_\alpha^-, \quad &\text{when}& \quad  u_\alpha=0.
\end{eqnarray*}
Consequently,
\begin{multline*}
\sum_{\{\alpha\in\mathcal N^o, u_\alpha<0\}} \Big(-L_hu_\alpha\cdot (v_\alpha- u_\alpha)+\lambda^+_\alpha\cdot(v_\alpha\vee 0-u_\alpha\vee 0) - \lambda^-_\alpha\cdot(v_\alpha\wedge 0-u_\alpha\wedge 0)\Big)=\\=\sum_{\{\alpha\in\mathcal N^o, u_\alpha<0\}} \Big(\lambda_\alpha^-\cdot (v_\alpha- u_\alpha)+\lambda^+_\alpha\cdot (v_\alpha\vee 0) - \lambda^-_\alpha\cdot(v_\alpha\wedge 0-u_\alpha)\Big)=\\=
\sum_{\{\alpha\in\mathcal N^o, u_\alpha<0\}} \Big(\lambda_\alpha^-\cdot (v_\alpha- v_\alpha\wedge 0)+\lambda^+_\alpha\cdot (v_\alpha\vee 0)\Big)\ge 0,
\end{multline*}
\begin{multline*}
\sum_{\{\alpha\in\mathcal N^o, u_\alpha>0\}} \Big(-L_hu_\alpha\cdot (v_\alpha- u_\alpha)+\lambda^+_\alpha\cdot(v_\alpha\vee 0-u_\alpha\vee 0) - \lambda^-_\alpha\cdot(v_\alpha\wedge 0-u_\alpha\wedge 0)\Big)=\\=\sum_{\{\alpha\in\mathcal N^o, u_\alpha>0\}} \Big(-\lambda^+_\alpha\cdot (v_\alpha- u_\alpha)+\lambda^+_\alpha\cdot(v_\alpha\vee 0-u_\alpha) - \lambda^-_\alpha\cdot(v_\alpha\wedge 0)\Big)=\\=\sum_{\{\alpha\in\mathcal N^o, u_\alpha>0\}} \Big(\lambda^+_\alpha\cdot (v_\alpha\vee 0-v_\alpha)- \lambda^-_\alpha\cdot(v_\alpha\wedge 0)\Big)\ge 0
\end{multline*}
and
\begin{multline*}
\sum_{\{\alpha\in\mathcal N^o, u_\alpha=0\}} \Big(-L_hu_\alpha\cdot (v_\alpha- u_\alpha)+\lambda^+_\alpha\cdot(v_\alpha\vee 0-u_\alpha\vee 0) - \lambda^-_\alpha\cdot(v_\alpha\wedge 0-u_\alpha\wedge 0)\Big)=\\=\sum_{\{\alpha\in\mathcal N^o, u_\alpha=0\}} \Big(-L_hu_\alpha\cdot (v_\alpha\vee 0+v_\alpha\wedge 0) + \lambda^+_\alpha\cdot(v_\alpha\vee 0) - \lambda^-_\alpha\cdot(v_\alpha\wedge 0)\Big)=\\=\sum_{\{\alpha\in\mathcal N^o, u_\alpha=0\}} \Big(\big( -L_hu_\alpha + \lambda^+_\alpha\big)\cdot(v_\alpha\vee 0) +\big( -L_hu_\alpha- \lambda^-_\alpha\big)\cdot(v_\alpha\wedge 0)\Big)\ge 0.
\end{multline*}
 This completes the proof of the lemma.
\end{proof}

\begin{lemma}
The nonlinear system \eqref{nonlinprob} has a unique solution.
\end{lemma}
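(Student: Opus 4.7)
The plan is to exploit the previous lemma, which identifies solutions of \eqref{nonlinprob} with the (shifted) minimizers of the functional $J_h$ on the finite-dimensional linear subspace $\mathcal K$. Hence it suffices to show that $J_h$ admits a unique minimizer on $\mathcal K$, and for this I would verify the standard trio of properties: continuity, strict convexity, and coercivity on $\mathcal K$.

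Continuity of $J_h$ is immediate since each summand is continuous in $v$. For convexity I would split $J_h$ into three pieces. The quadratic piece $-\tfrac12(L_h v,v)$ is handled by the well-known fact that the discrete Laplacian $-L_h$, acting on grid functions vanishing on $\partial\mathcal N$, is symmetric and positive definite: discrete summation by parts rewrites $-(L_h v,v)$, for $v\in\mathcal K$, as $h^{-2}$ times a sum of squared differences of $v$ across neighbouring grid points, and this sum vanishes only when $v\equiv 0$ (using the zero boundary values to propagate constancy to zero). The nonlinear piece $(\lambda^+,v\vee 0)-(\lambda^-,v\wedge 0)$ is convex because $t\mapsto t\vee 0$ and $t\mapsto -(t\wedge 0)$ are convex on $\mathbb R$ and the weights $\lambda^\pm_\alpha$ are nonnegative. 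The remaining term $-(L_h g,v)$ is linear in $v$. So $J_h$ is the sum of a strictly convex quadratic form, a convex function, and a linear function, and is therefore strictly convex on $\mathcal K$.

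Coercivity then follows quickly: positive definiteness on the finite-dimensional space $\mathcal K$ gives a constant $c>0$ with $-(L_h v,v)\ge c\|v\|^{2}$, while the remaining pieces are bounded below by $-C\|v\|$ (the nonlinear part because $|v\vee 0|,|v\wedge 0|\le|v|$, and the linear part by the Cauchy--Schwarz inequality). Thus $J_h(v)\to+\infty$ as $\|v\|\to\infty$, so $J_h$ attains its infimum on $\mathcal K$, and strict convexity makes this minimizer unique. Invoking the previous lemma then yields the unique solution to \eqref{nonlinprob} as $u=\tilde u+g$.

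I expect the only non-routine verification to be the strict positive definiteness of $-L_h$ on $\mathcal K$; everything else is a standard application of finite-dimensional convex analysis. The one- and two-dimensional situations only differ in the bookkeeping of the summation-by-parts identity, and both cases are classical.
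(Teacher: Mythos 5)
Your proposal follows exactly the route the paper takes: reduce to the minimization problem \eqref{minprob} via the preceding equivalence lemma and then invoke existence and uniqueness of the minimizer of $J_h$ on $\mathcal K$. The paper simply asserts that the minimization problem has a unique solution, whereas you correctly supply the standard justification (strict positive definiteness of $-L_h$ on $\mathcal K$, convexity of the $\lambda^\pm$ terms, coercivity), so your argument is a correct, more detailed version of the same proof.
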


\begin{proof}
The minimization problem \eqref{minprob} has a unique solution, implying the existence of a unique solution to  \eqref{nonlinprob}.
\end{proof}

\subsection{Comparison principles for continuous and discrete nonlinear systems}

\begin{lemma}\label{1P-lem-comparison_continuous}
Let  $\Omega$ be a bounded domain and $v_1,v_2\in W^{2,\infty}(\Omega)$. If
$$
 \mathcal F [v_1] \le \mathcal F [v_2]\quad \mbox{a.e. in} \quad \Omega \qquad  \mbox{and} \qquad v_1\le v_2\quad \mbox{on} \quad \partial\Omega,
$$
then
$v_1\le v_2$ in $\Omega.$
\end{lemma}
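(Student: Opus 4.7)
The strategy is to argue by contradiction on the set where $v_1$ exceeds $v_2$, by extracting a form of subharmonicity from the fully nonlinear inequality $\mathcal F[v_1]\le\mathcal F[v_2]$. The key observation, slightly sharper than the degenerate ellipticity of Lemma~\ref{2D-lem-degelliptic}, is the following \emph{strict} monotonicity of $F$: if $r_1>r_2$ and $-\mathrm{tr}(X_1)>-\mathrm{tr}(X_2)$, then
$$
F(x,r_1,p,X_1)>F(x,r_2,p,X_2).
$$
This is a direct consequence of writing out the $\min/\max$ structure of $F$ and of the elementary fact that $\min(a_1,b_1)>\min(a_2,b_2)$ (resp.\ $\max$) whenever $a_1>a_2$ and $b_1>b_2$.

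With that in hand, I would introduce $\Omega_0:=\{x\in\Omega:v_1(x)>v_2(x)\}$ and assume, for contradiction, that $\Omega_0\ne\emptyset$. Since $v_1,v_2\in W^{2,\infty}(\Omega)\subset C(\overline\Omega)$, the set $\Omega_0$ is open, and the boundary condition $v_1\le v_2$ on $\partial\Omega$ forces $\overline{\Omega_0}\cap\partial\Omega\subset\{v_1=v_2\}$, so that $w:=v_1-v_2$ satisfies $w\le 0$ on $\partial\Omega_0$ (by continuity, $w=0$ on $\partial\Omega_0\cap\Omega$, and $w\le 0$ on $\partial\Omega_0\cap\partial\Omega$ by hypothesis).

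Next, at every point $x\in\Omega_0$ at which both $v_1$ and $v_2$ are twice differentiable in the classical sense --- this holds almost everywhere for $W^{2,\infty}$ functions --- the strict monotonicity above applied with $r_i=v_i(x)$, $X_i=D^2v_i(x)$, combined with the assumption $\mathcal F[v_1](x)\le\mathcal F[v_2](x)$ and $v_1(x)>v_2(x)$, rules out the possibility $-\Delta v_1(x)>-\Delta v_2(x)$. Therefore $\Delta w\ge 0$ a.e.\ in $\Omega_0$. The final step is the weak maximum principle for $W^{2,\infty}$ subsolutions of $-\Delta w=0$ on the bounded open set $\Omega_0$, which yields $w\le 0$ throughout $\Omega_0$, contradicting the definition of $\Omega_0$.

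The main (mildly) delicate point is the a.e.\ nature of the subharmonicity conclusion and its interplay with the maximum principle on $\Omega_0$, but this is routine for $W^{2,\infty}$ functions; the $W^{2,\infty}$ regularity is exactly what allows this direct argument and spares us the usual viscosity doubling-of-variables machinery. Everything else is bookkeeping built on the strict monotonicity of $F$ in $(r,-\mathrm{tr}(X))$.
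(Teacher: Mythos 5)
Your proposal is correct and follows essentially the same route as the paper: both restrict to the open set $\{v_1>v_2\}$, use the strict joint monotonicity of $F$ in $(r,-\mathrm{tr}(X))$ to conclude $-\Delta v_1\le-\Delta v_2$ a.e.\ there, and finish with the weak maximum principle. Your write-up is merely more explicit about the boundary values of $v_1-v_2$ on $\partial\{v_1>v_2\}$ and the a.e.\ twice differentiability of $W^{2,\infty}$ functions, details the paper leaves implicit.
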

\begin{proof} Let $\Omega_1=\{x\in\Omega:\mbox{  } v_1(x)>v_2(x)\}$.  If the set $\Omega_2=\{x\in \Omega_1: -\Delta v_1(x)>-\Delta v_2(x)\}$ has positive Lebesgue measure, then we get a contradiction, since
$\mathcal F [v_1](x)> \mathcal F [v_2](x)\quad \mbox{in}\quad \Omega_2.$ Consequently, $-\Delta v_1(x)\leq-\Delta v_2(x)$ a.e. in $\Omega_1.$ But in this case the weak maximum principle implies $v_2\geq v_1$ in $\Omega_1$, which is inconsistent with the definition of $\Omega_1$. Therefore, $\Omega_1=\emptyset$.
\end{proof}

To formulate the discrete analogue of the previous Lemma, we introduce the following notation:
$$
\Delta_h v(x)=\frac{v(x-h)-2v(x)+v(x+h)}{h^2},
$$
$$
\Delta_h v(x,y)=\frac{v(x-h,y)+v(x+h,y)+v(x,y-h)+v(x,y+h)-4v(x,y)}{h^2}
$$
in one- and two-dimensional cases, respectively, and
$$
\mathcal F_h[v]=\min\left(-\Delta_h v + \lambda^+ ,\max(-\Delta_h v-\lambda^-,v)\right), \quad x\in\Omega_h
$$
with $\Omega_h=\{\alpha\cdot h: \alpha\in\mathcal N^o\}$. Let also $\partial \Omega_h=\{\alpha\cdot h: \alpha\in\partial \mathcal N\}.$

\begin{lemma} \label{1P-lem-comparison_discrete}
Suppose $v_1,v_2\in \mathcal H.$ If
$$
\mathcal F_h [v_1]\le \mathcal F_h [v_2] \quad \mbox{in}\quad \Omega_h \qquad \mbox{and} \qquad v_1\le v_2\quad on \quad \partial\Omega_h,
$$
then $v_1\le v_2$ in $\Omega_h$.
\end{lemma}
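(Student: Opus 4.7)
The plan is to mirror the structure of the continuous comparison lemma, Lemma~\ref{1P-lem-comparison_continuous}, replacing the weak/strong maximum principle for the Laplacian by a discrete analogue for $\Delta_h$. Setting $w = v_1 - v_2$, I would argue by contradiction and assume that $w_{\alpha_1} > 0$ for some $\alpha_1 \in \mathcal{N}^o$, and introduce the set $\mathcal{M} = \{\alpha \in \mathcal{N}^o : w_\alpha > 0\}$.

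The first step is a pointwise strict monotonicity lemma for the operator $\mathcal{F}_h$, playing the role of ``$-\Delta v_1 \le -\Delta v_2$ on $\Omega_1$'' in the continuous proof. For a fixed interior index $\alpha_0$, define
$$
F(A,r) := \min\!\bigl(-A + \lambda^+_{\alpha_0},\,\max(-A - \lambda^-_{\alpha_0},\, r)\bigr).
$$
I would verify that if $A_1 < A_2$ and $r_1 > r_2$, then $F(A_1,r_1) > F(A_2,r_2)$. This is a case analysis, splitting according to which of the three regions $r < -A - \lambda^-$, $-A - \lambda^- \le r \le -A + \lambda^+$, $r > -A + \lambda^+$ contains $r_1$ and $r_2$; the assumption $\lambda^+_{\alpha_0} + \lambda^-_{\alpha_0} > 0$ ensures the two thresholds are genuinely separated, so the strict inequality survives in each of the nine subcases. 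Applied with $A_i = \Delta_h v_i(\alpha_0)$ and $r_i = v_i(\alpha_0)$, the hypothesis $\mathcal{F}_h[v_1](\alpha_0) \le \mathcal{F}_h[v_2](\alpha_0)$ forces $\Delta_h v_1(\alpha_0) \ge \Delta_h v_2(\alpha_0)$ at every $\alpha_0 \in \mathcal{M}$; that is, $\Delta_h w \ge 0$ throughout $\mathcal{M}$.

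The second step is a discrete strong maximum principle argument. Let $\alpha^*\in\mathcal{N}$ realise the maximum of $w$. Since $w_{\alpha_1} > 0$ while $w \le 0$ on $\partial\mathcal{N}$, necessarily $\alpha^* \in \mathcal{N}^o \cap \mathcal{M}$. At an interior maximum $\Delta_h w_{\alpha^*}\le 0$ is automatic from the definition of $\Delta_h$; combined with Step~1 we get $\Delta_h w_{\alpha^*}=0$. Since $\Delta_h$ is an equally-weighted average of stencil differences with positive weights, this forces $w_\beta = w_{\alpha^*}$ at every stencil-neighbor $\beta$ of $\alpha^*$. Iterating along the connected Cartesian grid propagates the positive value $w_{\alpha^*}$ all the way to $\partial\mathcal{N}$, contradicting $w\le 0$ on $\partial\mathcal{N}$. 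Hence $\mathcal{M}=\emptyset$, and $v_1\le v_2$ on $\Omega_h$.

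The main obstacle I expect is Step~1: because $\mathcal{F}_h$ is only \emph{degenerate} elliptic, its monotonicity in $A$ and in $r$ is non-strict on the plateaus of the $\min$ and $\max$, so neither inequality alone is enough to separate the values of $F$. The remedy is to combine $A_1 < A_2$ with $r_1 > r_2$ and to treat the nine region combinations individually, relying critically on $\lambda^+ + \lambda^- > 0$ in the two ``corner'' subcases that mix the $u<0$ regime with the $u>0$ regime. The remaining maximum-principle step is standard bookkeeping on the grid.
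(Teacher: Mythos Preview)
Your argument is correct and, unlike the paper, self-contained: the paper does not actually prove this lemma but simply refers to a general comparison principle for degenerate elliptic schemes in the literature. Your two-step plan---first force $\Delta_h w\ge 0$ on the positivity set $\mathcal{M}$ via strict monotonicity of $F$, then run a discrete strong maximum principle---does the job cleanly. The propagation in Step~2 is fine because every neighbour inheriting the value $w_{\alpha^*}>0$ automatically lies in $\mathcal{M}$ (or on $\partial\mathcal N$, giving the contradiction), so Step~1 applies again and the argument iterates to the boundary.

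One simplification worth noting: the strict monotonicity in Step~1 is easier than you suggest, and does \emph{not} require $\lambda^+_{\alpha_0}+\lambda^-_{\alpha_0}>0$. If $A_1<A_2$ and $r_1>r_2$, then both arguments of the inner $\max$ are strictly larger for $(A_1,r_1)$, hence $\max(-A_1-\lambda^-,r_1)>\max(-A_2-\lambda^-,r_2)$; and $-A_1+\lambda^+>-A_2+\lambda^+$. The minimum of two strictly larger quantities is strictly larger (if $\min(a_1,b_1)=a_1$ then $a_1>a_2\ge\min(a_2,b_2)$, and symmetrically for $b_1$). So no nine-case split is needed, and the positivity condition on $\lambda^\pm$ is irrelevant at this point.
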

\begin{proof} For the proof we refer to \cite{MR2218974}, where the author proves the comparison principle for more general type of schemes called degenerate elliptic schemes.
\end{proof}

\subsection{Regularization and error estimate}

The technique developed in this section applies for any dimension $n$. The idea comes from \cite{MR1759507} and \cite{MR2532350}, where in the first article the author obtains some estimates for the rate of convergence of finite difference approximation for degenerate parabolic Bellman's equations, and in the second paper the method is developed to obtain the optimal convergence rate for finite difference approximation to American Option valuation problem.


Let $\beta\in C^\infty(\mathbb R)$ be a function satisfying

$$
\beta(z)=1,\quad z\ge 1; \qquad  \beta (z)= 0, \quad z\le -1;
$$
$$
\beta' (z)\ge 0,\quad z\in\mathbb R,
$$
where $\beta_\varepsilon(x)=\beta\left(\frac{x}{\varepsilon}\right)$, $x\in \mathbb R$. We denote by $u^\varepsilon$ the solution of the following auxiliary problem:
\begin{equation}\label{2P-eq-Regularization}
\left\{
\begin{array}{ll}
\Delta u^\varepsilon=\lambda^+\cdot\beta_\varepsilon (u^\varepsilon)-\lambda^-\cdot\beta_\varepsilon (-u^\varepsilon)& in\ \Omega,\\
u^\varepsilon = g & on\ \partial\Omega.
\end{array}
\right.
\end{equation}

\begin{lemma}\label{2P-lem-u-u_eps-est}
If $u$ is the solution of two-phase obstacle problem, and $u^{\varepsilon}$ is the regularized solution (i.e. the solution of \eqref{2P-eq-Regularization}), then
\[|u-u^{\varepsilon}|\leq\varepsilon.\]
\end{lemma}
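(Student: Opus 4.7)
My plan is to bracket $u$ between $u^\varepsilon-\varepsilon$ and $u^\varepsilon+\varepsilon$ by exhibiting these two shifts as a super-- and sub--solution respectively to the min-max form \eqref{2D-Problem-Viscosity}, and then to invoke the comparison principle of Lemma~\ref{1P-lem-comparison_continuous} (applied in the a.e. sense, which is legitimate because $u\in C^{1,1}_{loc}$ and $u^\varepsilon\in W^{2,\infty}$ thanks to the bounded right-hand side of \eqref{2P-eq-Regularization}).

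Upper estimate: set $v^+:=u^\varepsilon+\varepsilon$. On $\partial\Omega$ we have $v^+=g+\varepsilon\ge g=u$. Using \eqref{2P-eq-Regularization},
$$
-\Delta v^+=\lambda^-\beta_\varepsilon(-u^\varepsilon)-\lambda^+\beta_\varepsilon(u^\varepsilon),
$$
so $-\Delta v^++\lambda^+=\lambda^+(1-\beta_\varepsilon(u^\varepsilon))+\lambda^-\beta_\varepsilon(-u^\varepsilon)\ge 0$ because $\beta_\varepsilon\in[0,1]$ and $\lambda^\pm\ge 0$. I then split on the sign of $u^\varepsilon+\varepsilon$: if $u^\varepsilon>-\varepsilon$ then $v^+>0$, so the inner $\max$ is positive and $\mathcal F[v^+]\ge 0$; if $u^\varepsilon\le-\varepsilon$ then $\beta_\varepsilon(u^\varepsilon)=0$ and $\beta_\varepsilon(-u^\varepsilon)=1$, hence $-\Delta v^+-\lambda^-=0$ and $v^+\le 0$, which again gives $\max(-\Delta v^+-\lambda^-,v^+)\ge 0$ and thus $\mathcal F[v^+]\ge 0=\mathcal F[u]$. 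Comparison now yields $u\le u^\varepsilon+\varepsilon$.

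Lower estimate (symmetric): set $v^-:=u^\varepsilon-\varepsilon$. Again $v^-\le g=u$ on $\partial\Omega$, and
$$
-\Delta v^--\lambda^-=-\lambda^+\beta_\varepsilon(u^\varepsilon)-\lambda^-(1-\beta_\varepsilon(-u^\varepsilon))\le 0.
$$
Case 1: if $u^\varepsilon\ge\varepsilon$ then $\beta_\varepsilon(u^\varepsilon)=1$, $\beta_\varepsilon(-u^\varepsilon)=0$, giving $-\Delta v^-+\lambda^+=0$ and hence $\mathcal F[v^-]\le 0$. Case 2: if $u^\varepsilon<\varepsilon$ then $v^-<0$ and the displayed inequality gives $\max(-\Delta v^--\lambda^-,v^-)\le 0$, whence again $\mathcal F[v^-]\le 0=\mathcal F[u]$. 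Comparison gives $u^\varepsilon-\varepsilon\le u$. Combining the two estimates yields $|u-u^\varepsilon|\le\varepsilon$.

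The main obstacle is purely bookkeeping: matching the sign of $v^\pm$ with the sign of $u^\varepsilon$ to decide whether the $\max$--term or the $\min$--term controls the sign of $\mathcal F[v^\pm]$. The key structural fact that makes every case work is that $\beta_\varepsilon$ and $1-\beta_\varepsilon$ take values in $[0,1]$, while $\lambda^\pm\ge 0$, so the terms $\lambda^+(1-\beta_\varepsilon(u^\varepsilon))$ and $\lambda^-\beta_\varepsilon(-u^\varepsilon)$ carry the correct sign in each region. One minor technical point to double-check is that Lemma~\ref{1P-lem-comparison_continuous} applies: $u^\varepsilon$ is $W^{2,\infty}$ because $\lambda^\pm\beta_\varepsilon(\pm u^\varepsilon)\in L^\infty$, and $u\in W^{2,\infty}_{loc}$ by the optimal $C^{1,1}_{loc}$ regularity recalled in the introduction, and the comparison inequalities hold a.e.\ where both Laplacians are defined.
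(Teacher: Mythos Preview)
Your argument is correct and follows essentially the same route as the paper: both proofs show $\mathcal F[u^\varepsilon-\varepsilon]\le 0=\mathcal F[u]\le \mathcal F[u^\varepsilon+\varepsilon]$ by the same two-case split on the sign of $u^\varepsilon\mp\varepsilon$, and then invoke Lemma~\ref{1P-lem-comparison_continuous}. The only cosmetic difference is that the paper first records the bound $-\lambda^-\le\Delta u^\varepsilon\le\lambda^+$ (which is exactly your observation that $\beta_\varepsilon\in[0,1]$) and uses it directly, whereas you carry the explicit $\beta_\varepsilon$-expressions; your added remark on the $W^{2,\infty}$ regularity needed to apply the comparison lemma is a welcome clarification that the paper leaves implicit.
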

\begin{proof} It follows from the definition of $u^\varepsilon$ that
$$
-\lambda^-\le\Delta u^{\varepsilon}\leq\lambda^+.
$$
Now, if  $u^{\varepsilon}\leq\varepsilon$, then
\begin{multline*}
\mathcal F[u^{\varepsilon}-\varepsilon]=\min(-\Delta u^{\varepsilon} + \lambda^+ ,\max(-\Delta u^{\varepsilon}-\lambda^-,u^{\varepsilon}-\varepsilon))=\\
\max(-\Delta u^{\varepsilon}-\lambda^-,u^{\varepsilon}-\varepsilon)\leq 0=\mathcal F[u]
\end{multline*}
As to the case $u^{\varepsilon}>\varepsilon$, we obviously get that $\Delta u^{\varepsilon}=\lambda^+$.
Therefore
\begin{align*}
\mathcal F[u^{\varepsilon}-\varepsilon]                           &=\min(-\Delta u^{\varepsilon} + \lambda^+ ,\max(-\Delta u^{\varepsilon}-\lambda^-,u^{\varepsilon}-\varepsilon))\\&=\min(0,\max(- \lambda^+-\lambda^-,u^{\varepsilon}-\varepsilon))=\min(0,u^{\varepsilon}-\varepsilon)=0=\mathcal F[u].
\end{align*}
Hence,
$$
\mathcal F[u^{\varepsilon}-\varepsilon]\leq \mathcal F[u] \quad\mbox{in}\quad \Omega.
$$

By Lemma \ref{1P-lem-comparison_continuous} we obtain
$$
u^{\varepsilon}-\varepsilon\leq u.
$$
In the same way, by considering the cases $u^{\varepsilon}\geq -\varepsilon$ and $u^{\varepsilon}<-\varepsilon$,
we will get
$ \mathcal F[u^{\varepsilon}+\varepsilon]\geq \mathcal F[u],$ and using again Lemma \ref{1P-lem-comparison_continuous} we obtain
$$
u^{\varepsilon}+\varepsilon\geq u.
$$
\end{proof}

\begin{lemma}\label{2P-less_epsilon}
If $u^{\varepsilon}$ is the solution of \eqref{2P-eq-Regularization}, then
$$
   \left|\mathcal F[u^{\varepsilon}]\right|\leq\varepsilon \quad \mbox{in}\quad \Omega.
$$
\end{lemma}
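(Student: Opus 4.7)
The plan is a direct case analysis according to the value of $u^\varepsilon(x)$, exploiting the precise form of the cut-off $\beta_\varepsilon$ together with the fact that $\mathcal{F}[u^\varepsilon]$ is built from the same quantities $\Delta u^\varepsilon$, $\lambda^\pm$, $u^\varepsilon$ that appear in the regularized equation \eqref{2P-eq-Regularization}. The starting observation, already used in the proof of Lemma \ref{2P-lem-u-u_eps-est}, is that because $0\le\beta_\varepsilon\le 1$, the regularized PDE immediately gives
\[
-\lambda^-\ \le\ \Delta u^\varepsilon\ \le\ \lambda^+\quad\text{in }\Omega,
\]
so that $-\Delta u^\varepsilon+\lambda^+\ge 0$ and $-\Delta u^\varepsilon-\lambda^-\le 0$ everywhere. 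This already pins down the signs of the two arguments of the outer $\min$ in the definition of $\mathcal{F}[u^\varepsilon]$.

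Next I would split $\Omega$ into three regions. On $\{u^\varepsilon\ge\varepsilon\}$, the definition of $\beta_\varepsilon$ forces $\beta_\varepsilon(u^\varepsilon)=1$ and $\beta_\varepsilon(-u^\varepsilon)=0$, so $\Delta u^\varepsilon=\lambda^+$, whence $-\Delta u^\varepsilon+\lambda^+=0$ and the outer $\min$ is immediately $0$. Symmetrically, on $\{u^\varepsilon\le-\varepsilon\}$ one gets $\Delta u^\varepsilon=-\lambda^-$, hence $-\Delta u^\varepsilon-\lambda^-=0$ and $\max(-\Delta u^\varepsilon-\lambda^-,u^\varepsilon)=0$, so again $\mathcal{F}[u^\varepsilon]=0$. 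In both exterior regions we trivially have $|\mathcal{F}[u^\varepsilon]|=0\le\varepsilon$.

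The only region that really needs work is $\{|u^\varepsilon|<\varepsilon\}$, and this is what I would flag as the main (though routine) obstacle, since both branches of $\min$ and $\max$ are genuinely in play. Here I would argue that $\mathcal{F}[u^\varepsilon]$ is sandwiched between $u^\varepsilon$ and $0$ in sign. Indeed, since $-\Delta u^\varepsilon-\lambda^-\le 0$, the inner $\max$ is always $\le \max(0,u^\varepsilon\vee 0)$-type quantity bounded above by $|u^\varepsilon|$; and since $-\Delta u^\varepsilon+\lambda^+\ge 0$, the outer $\min$ cannot exceed the inner $\max$. A short sub-case analysis on the sign of $u^\varepsilon$ and on which of $-\Delta u^\varepsilon-\lambda^-$ or $u^\varepsilon$ is larger then yields in each sub-case that $\mathcal{F}[u^\varepsilon]$ lies in either $[0,u^\varepsilon]$ (when $u^\varepsilon\ge 0$) or in $[\,\min(u^\varepsilon,-\Delta u^\varepsilon-\lambda^-),\,0\,]\subset(-\varepsilon,0]$ (when $u^\varepsilon<0$); both intervals are contained in $[-\varepsilon,\varepsilon]$ because $|u^\varepsilon|<\varepsilon$.

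Combining the three regions gives $|\mathcal{F}[u^\varepsilon]|\le\varepsilon$ pointwise in $\Omega$, as required. The whole argument is elementary once one notices that the only non-trivial contribution comes from the transition layer $\{|u^\varepsilon|<\varepsilon\}$, and that on this layer the very width of the layer provides the $\varepsilon$-bound for free.
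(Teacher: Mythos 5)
Your proof is correct and follows essentially the same route as the paper: both use $-\lambda^-\le\Delta u^{\varepsilon}\le\lambda^+$, observe that $\mathcal F[u^{\varepsilon}]=0$ on the regions where $|u^{\varepsilon}|\ge\varepsilon$ (since there $\Delta u^{\varepsilon}=\lambda^+$ or $\Delta u^{\varepsilon}=-\lambda^-$), and on the transition layer sandwich $\mathcal F[u^{\varepsilon}]$ between $0$ and $u^{\varepsilon}$. One small slip worth fixing: in the case $u^{\varepsilon}<0$ the correct interval is $[u^{\varepsilon},0]$, because $\mathcal F[u^{\varepsilon}]=\max(-\Delta u^{\varepsilon}-\lambda^-,u^{\varepsilon})\ge u^{\varepsilon}>-\varepsilon$; the interval $[\min(u^{\varepsilon},-\Delta u^{\varepsilon}-\lambda^-),0]$ you wrote need not be contained in $(-\varepsilon,0]$, though the conclusion is unaffected.
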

\begin{proof}
It is easy to see that $\mathcal F[u^{\varepsilon}]=0$ when $\left|u^{\varepsilon}\right|>\varepsilon.$

In the case $0\leq u^{\varepsilon}\le \varepsilon$   we have
$$
0\leq \mathcal F[u^{\varepsilon}]=\min(-\Delta u^{\varepsilon} + \lambda^+ ,\max(-\Delta u^{\varepsilon}-\lambda^-,u^{\varepsilon}))
=\min(-\Delta u^{\varepsilon} + \lambda^+,u^{\varepsilon})\leq u^{\varepsilon}\leq\varepsilon.
$$
Similarly, in the case  $-\varepsilon\leq u^{\varepsilon}<0$ we can prove that
$$
-\varepsilon\leq \mathcal F[u^{\varepsilon}]\leq 0.
$$
\end{proof}

\section{Convergence of the PGS algorithm}

\subsection{PGS algorithm for one-dimensional two-phase obstacle problem}

Now we propose an algorithm to construct an iterative sequence converging to the solution to nonlinear system \eqref{nonlinprob}. The idea is based on well-known PSOR (Projected Successive Over-Relaxation) method (see \cite{MR0298922}). We will call our algorithm \textit{Projected Gauss-Seidel (PGS)} method, since the main ingredient here is the Gauss-Seidel iteration combined with projection step. It should be mentioned here that the Gauss-Seidel method is a particular case of SOR algorithm.

For the sake of simplicity, we consider here only the one-dimensional case. Let $u=(u_0,u_1,...,u_N)$ be the solution of \eqref{nonlinprob} in one-dimensional case. In particular, $u_0=g_0$ and $u_N=g_N$. We will use the notation $\tilde u=(u_1,u_2,...,u_{N-1})$. This is the unknown part of $u$ that needs to be calculated. If we introduce also the following $N-1$ dimensional vectors:
$$
\tilde{\lambda}^\pm=\left(\lambda_1^\pm -\frac{g_0}{h^2},\ \lambda_2^\pm,\ ...,\ \lambda_{N-2}^\pm,\ \lambda_{N-1}^\pm-\frac{g_N}{h^2}\right),
$$
then, in one-dimensional case, the system \eqref{nonlinprob} can be rewritten it the following equivalent form :

\begin{equation}\label{2P-eq-1D-MatrixForm}
    \begin{cases}
      {\rm if}\ \tilde u_i>0,\ {\rm then}\ (A\tilde u)_i=\tilde \lambda^+_i,\\
      {\rm if}\ \tilde u_i<0,\ {\rm then}\ (A\tilde u)_i=-\tilde \lambda^-_i,\\
      -\tilde\lambda^-_i\le (A\tilde u)_i\le \tilde \lambda^+_i,\quad\forall i,
    \end{cases}
\end{equation}
where $A$ is the $(N-1)\times (N-1)$ dimensional tridiagonal matrix with $-2$'s on its main diagonal and $1$'s on two parallels, and $\tilde u\vee 0$ and $\tilde u \wedge 0$ are componentwise positive and negative parts of $\tilde u$, respectively.

We suggest the following algorithm to solve \eqref{2P-eq-1D-MatrixForm}:

Given the initial approximation
$$
  \tilde u^o=(\tilde u^o_1,\tilde u^o_2,...,\tilde u^o_{N-1}),
$$
for every $k=1,2,...$ and $1\le i\le N-1$ we denote
$$
  z_i^1=\frac{1}{2}\left(\tilde u_{i-1}^k+\tilde u_{i+1}^{k-1}-h^2\cdot \tilde \lambda^+_i\right) , \qquad
  z_i^2=\frac{1}{2}\left(\tilde u_{i-1}^k+\tilde u_{i+1}^{k-1}+h^2\cdot \tilde \lambda^-_i\right),
$$
with $\tilde u_0^{k}=\tilde u_N^k=0$ for all $k$.

Note that $z_i^1$ is the $k$-th step solution for $A\tilde u=\tilde \lambda^+$ by Gauss-Seidel method and $z_i^2$ is the $k$-th step solution for $A\tilde u=-\tilde\lambda^-$.

Then proceed as follows:

\begin{equation}\label{2P-eq-algorithm}
\begin{array}{ll}
\mbox{if}\ \  z_i^1\ge 0,&\quad {\rm then}\quad \tilde u_i^k=z_i^1;\\[6pt]
\mbox{if}\ \  z_i^2\le 0,&\quad {\rm then}\quad \tilde u_i^k=z_i^2;\\[6pt]
\mbox{if}\ \  z_i^1<0<z_i^2,&\quad {\rm then}\quad \tilde u_i^k=0.
\end{array}
\end{equation}

We will call the sequence $\tilde u^k=\left(\tilde u^k_1,\tilde u^k_2,...,\tilde u^k_{N-1}\right)$ constructed in this way \textit{the sequence obtained by PGS method}. The next section is devoted to the convergence analysis of this sequence.

\subsection{Convergence of the PGS algorithm}
$ $
\begin{theorem}\label{2P-thm-PGS-convergence}
The sequence $\tilde u^k$ converges and $\displaystyle\lim_{k\to\infty} \tilde u^k=\tilde u$.
\end{theorem}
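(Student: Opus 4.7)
The plan is to recognize one full sweep of the PGS iteration as one pass of Gauss-Seidel block-coordinate descent on the functional $J_h$, and then deduce convergence from energy dissipation plus the uniqueness of the minimizer $\tilde u$ of $J_h$ over $\mathcal K$ (which, by the earlier lemma, coincides with the unique solution of \eqref{nonlinprob}).

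\textbf{Step 1: PGS equals one-coordinate minimization.} For each index $i \in \{1,\dots,N-1\}$, consider the restriction
\[
\varphi_i(t) \;=\; J_h\bigl(\tilde u_1^{k+1},\dots,\tilde u_{i-1}^{k+1},\,t,\,\tilde u_{i+1}^{k},\dots,\tilde u_{N-1}^{k}\bigr).
\]
A direct computation shows that $\varphi_i$ is continuous, piecewise quadratic, with second derivative equal to $2/h^2$ on each of the pieces $\{t>0\}$ and $\{t<0\}$, the $t^2/h^2$ term coming from the diagonal contribution of $-\tfrac12(L_h v,v)$. Differentiating on each piece, and using the boundary-corrected $\tilde\lambda^\pm$ to absorb the contribution of $-(L_h g,v)$, the unconstrained critical points are exactly $z_i^1$ on $\{t>0\}$ and $z_i^2$ on $\{t<0\}$. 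A sign check shows that the global minimizer of $\varphi_i$ is precisely the three-branch rule in \eqref{2P-eq-algorithm}. Thus one PGS sweep is exactly a Gauss-Seidel coordinate-descent sweep on $J_h$.

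\textbf{Step 2: Energy dissipation and boundedness.} Strong convexity of $\varphi_i$ with modulus $2/h^2$ gives $\varphi_i(\text{old})-\varphi_i(\text{new})\ge h^{-2}(\text{new}-\text{old})^2$ at each one-coordinate update. Summing over $i$ in a full sweep,
\[
J_h(\tilde u^k) - J_h(\tilde u^{k+1}) \;\ge\; \frac{1}{h^2}\sum_{i=1}^{N-1}\bigl(\tilde u_i^{k+1}-\tilde u_i^{k}\bigr)^2.
\]
Since $-\tfrac12(L_h v,v)\ge c\|v\|^2$ on $\mathcal K$ by the discrete Poincar\'e inequality, $J_h$ is coercive on $\mathcal K$ and bounded below by $J_h(\tilde u)$. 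Telescoping the previous inequality then yields both boundedness of $\{\tilde u^k\}$ and summability $\sum_k\|\tilde u^{k+1}-\tilde u^k\|^2<\infty$, so in particular $\tilde u^{k+1}-\tilde u^k\to 0$.

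\textbf{Step 3: Identification of the limit.} The rule \eqref{2P-eq-algorithm} defines a continuous (piecewise-affine) map from $(\tilde u_{i-1}^{k+1},\tilde u_{i+1}^{k})$ to $\tilde u_i^{k+1}$, because on the transitional sets $\{z_i^1=0\}$ and $\{z_i^2=0\}$ both candidate branches agree with the value $0$. Extract a convergent subsequence $\tilde u^{k_j}\to u^*$; by Step 2, also $\tilde u^{k_j+1}\to u^*$. Passing to the limit in \eqref{2P-eq-algorithm} shows $u^*$ is a fixed point of the PGS map, hence a solution of \eqref{nonlinprob}. By the uniqueness lemma, $u^* = \tilde u$. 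Since every subsequential limit of the bounded sequence $\{\tilde u^k\}$ equals $\tilde u$, the whole sequence converges to $\tilde u$.

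The main obstacle is Step 1: verifying by careful piecewise-quadratic case analysis that the three-branch formula really is the one-variable minimizer of $J_h$, in the presence of the kink at $t=0$ and the boundary adjustment packaged into $\tilde\lambda^\pm$. The remaining ingredients are standard consequences of coordinate descent applied to a coercive, strongly convex objective with a unique minimizer.
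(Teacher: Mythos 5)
Your proposal is correct and follows essentially the same route as the paper: both rest on showing that each one-coordinate PGS update decreases $J_h$ by at least $h^{-2}(\tilde u_i^{\mathrm{new}}-\tilde u_i^{\mathrm{old}})^2$, then using boundedness below of $J_h$ to get summability of the squared increments and passing to the limit, with the earlier equivalence and uniqueness lemmas identifying the limit as $\tilde u$. Your packaging is arguably tighter in two places where the paper is terse --- deriving the decrease bound from strong convexity of the piecewise-quadratic coordinate restriction $\varphi_i$ rather than the paper's three-case hand computation, and closing the argument via a bounded-subsequence/continuity/uniqueness step where the paper infers Cauchyness directly from the summable squared increments and states that ``the limit solves the system'' without proof.
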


\begin{proof} Denote
$$
\tilde u^{k,i}=\left(\tilde u^k_1,\tilde u^k_2,...,\tilde u^k_i, \tilde u^{k-1}_{i+1},...,\tilde u^{k-1}_{N-1}\right),\quad i=1,...,N-1,\quad k\in\mathbb N,
$$
$$
u^{k,i}=\left(0,\tilde u^k_1,\tilde u^k_2,...,\tilde u^k_i, \tilde u^{k-1}_{i+1},...,\tilde u^{k-1}_{N-1},0\right)\in\mathcal K,\quad i=1,...,N-1,\quad k\in\mathbb N
$$
and
$\mathcal J_p= J_h\left(u^{k,i}\right)$ for $p=(N-1)(k-1)+i$ with $i=1,...,N-1.$

The main idea is to prove that $\mathcal J_p$ decreases.

First let $p\not\in\{q(N-1): q\in\mathbb N\}$, i.e. $i\ne N-1$. Then
\begin{multline*}
\mathcal J_p-\mathcal J_{p+1}= J_h\left(u^{k,i}\right)- J_h\left(u^{k,i+1}\right)=-\frac{1}{2}\left(L_h \left(u^{k,i}-u^{k,i+1}\right),u^{k,i}-u^{k,i+1}\right)-\\
\left(L_h u^{k,i+1},u^{k,i}-u^{k,i+1}\right)+\left(\lambda^+,u^{k,i}\vee 0-u^{k,i+1}\vee 0\right)-\left(\lambda^-,u^{k,i}\wedge 0-u^{k,i+1}\wedge 0\right)-\\
\left(L_h g,u^{k,i}-u^{k,i+1}\right)
=\frac{1}{h^2}\left(\tilde u^{k-1}_{i+1}-\tilde u^{k}_{i+1}\right)^2
-\frac{\tilde u^k_i-2\tilde u^{k}_{i+1}+\tilde u^{k-1}_{i+2}}{h^2}\cdot \left(\tilde u^{k-1}_{i+1}-\tilde u^{k}_{i+1}\right)+\\ \lambda^+_{i+1}\cdot\left[\tilde u^{k-1}_{i+1}\vee 0-\tilde u^{k}_{i+1}\vee 0\right]-
-\lambda^-_{i+1}\cdot\left[\tilde u^{k-1}_{i+1}\wedge 0-\tilde u^{k}_{i+1}\wedge 0\right]-(L_h g)_{i+1}\cdot \left(\tilde u^{k-1}_{i+1}-\tilde u^{k}_{i+1}\right).
\end{multline*}

We continue by considering three cases:

\paragraph{Case 1: $\tilde u^k_{i+1}> 0$} It follows from \eqref{2P-eq-algorithm} that
$\frac{\tilde u^k_i-2\tilde u^{k}_{i+1}+\tilde u^{k-1}_{i+2}}{h^2}=\tilde\lambda^+_{i+1}.$ Hence,
$$
\mathcal J_p-\mathcal J_{p+1}=\frac{1}{h^2}\left(\tilde u^{k-1}_{i+1}-\tilde u^{k}_{i+1}\right)^2-\tilde\lambda^+_{i+1}\cdot \left(\tilde u^{k-1}_{i+1}-\tilde u^{k}_{i+1}\right)+\lambda^+_{i+1}\cdot\left[\tilde u^{k-1}_{i+1}\vee 0-\tilde u^{k}_{i+1}\right]-
$$
$$
-\lambda^-_{i+1}\cdot \tilde u^{k-1}_{i+1}\wedge 0-(L_h g)_{i+1}\cdot \left(\tilde u^{k-1}_{i+1}-\tilde u^{k}_{i+1}\right).
$$

Now, if $1\le i<N-1$, then $\tilde\lambda^+_{i+1}=\lambda^+_{i+1}$ and $(L_h g)_{i+1}=0$, so
$$
\mathcal J_p-\mathcal J_{p+1}=\frac{1}{h^2}\left(\tilde u^{k-1}_{i+1}-\tilde u^{k}_{i+1}\right)^2- (\lambda^+_{i+1}+\lambda^-_{i+1})\cdot \left(\tilde u^{k-1}_{i+1}\wedge 0\right)\ge\frac{1}{h^2}\left(\tilde u^{k-1}_{i+1}-\tilde u^{k}_{i+1}\right)^2
$$

If $i=N-1$, then $\tilde\lambda^+_{i+1}=\lambda^+_{i+1}-\frac{g_N}{h^2}$ and $(L_h g)_{i+1}=\frac{g_N}{h^2}$, so
\begin{multline*}
\mathcal J_p-\mathcal J_{p+1}=\frac{1}{h^2}\left(\tilde u^{k-1}_{i+1}-\tilde u^{k}_{i+1}\right)^2- (\lambda^+_{i+1}+\lambda^-_{i+1})\cdot \left(\tilde u^{k-1}_{i+1}\wedge 0\right)+\frac{g_N}{h^2}\tilde u^k_{i+1}\ge\\
\frac{1}{h^2}\left(\tilde u^{k-1}_{i+1}-\tilde u^{k}_{i+1}\right)^2.
\end{multline*}
Hence, in this case we have
\begin{equation}\label{2p-eq-thm3.1-1}
\mathcal J_p-\mathcal J_{p+1}\ge\frac{1}{h^2}\left(\tilde u^{k-1}_{i+1}-\tilde u^{k}_{i+1}\right)^2.
\end{equation}

\paragraph{Case 2: $\tilde u^k_{i+1}< 0$} Analogously to the previous case we can prove that \eqref{2p-eq-thm3.1-1} holds also in this case.

\paragraph{Case 3: $\tilde u^k_{i+1}=0$} It follows from \eqref{2P-eq-algorithm} that either
$$
\frac{\tilde u^k_i+\tilde u^{k-1}_{i+1}}{h^2}=\tilde\lambda^+_{i+1}\quad \mbox{or}\quad \frac{\tilde u^k_i+\tilde u^{k-1}_{i+1}}{h^2}=-\tilde\lambda^-_{i+1}
$$
or
$$
\frac{\tilde u^k_i+\tilde u^{k-1}_{i+1}}{h^2}-\tilde\lambda^+_{i+1}< 0<\frac{\tilde u^k_i+\tilde u^{k-1}_{i+1}}{h^2}+\tilde\lambda^-_{i+1},
$$
depending on the signs of $z_{i+1}^1$ and $z_{i+1}^2$. The first two cases are treated analogously to the Cases 1 and 2, so we will consider only the third possibility. In that case

$$
\mathcal J_p-\mathcal J_{p+1}=\frac{1}{h^2}\left(\tilde u^{k-1}_{i+1}-\tilde u^{k}_{i+1}\right)^2-
\left(\tilde u^{k-1}_{i+1}\vee 0\right)\cdot\left( \frac{\tilde u^k_i+\tilde u^{k-1}_{i+1}}{h^2}-\tilde\lambda^+_{i+1}\right)-
$$
$$
-\left(\tilde u^{k-1}_{i+1}\wedge 0\right)\cdot\left( \frac{\tilde u^k_i+\tilde u^{k-1}_{i+1}}{h^2}+\tilde\lambda^-_{i+1}\right)-(L_h g)_{i+1}\cdot \tilde u^{k-1}_{i+1}.
$$

Now, treating, as above, the cases $1\le i <N-1$ and $i=N-1$ separately, we obtain that \eqref{2p-eq-thm3.1-1} holds also in this case.

So far we have considered the case $p\not\in\{q(N-1): q\in\mathbb N\}$. Now assume that $p\in\{q(N-1): q\in\mathbb N\}$. In that case we'll obtain
\begin{equation}\label{2p-eq-thm3.1-2}
\mathcal J_p-\mathcal J_{p+1}\ge \frac{1}{h^2}\left(\tilde u^{k}_{i+1}-\tilde u^{k-1}_{i+1}\right)^2.
\end{equation}

Summarizing, we deduce that $\mathcal J_p$ decreases, and, since it is also bounded from below, we obtain that the sequence $\mathcal J_p$ converges. But in that case from \eqref{2p-eq-thm3.1-1} and \eqref{2p-eq-thm3.1-2} we can conclude that $\tilde u^{k}_{i}$ is a Cauchy sequence, hence also converges for any fixed $i=1,...,N-1$.

Finally, it can be easily verified that the limit solves \eqref{nonlinprob}.
\end{proof}

\section{Numerical Examples}
\paragraph{Example 1}
 We consider the following one-dimensional two-phase obstacle problem:
$$
\left\{
\begin{array}{ll}
\Delta u = 8\cdot\chi_{\{u>0\}}-8\cdot\chi_{\{u<0\}}, & x\in (-1,1)\\
u(-1)=-1, \quad u(1)=1.
\end{array}
\right.
$$

In this case the exact solution can be written down as a piecewise polynomial function:
$$
  u(x)=\left\{
  \begin{array}{ll}
  4x^2-4x+1,& 0.5\le x\le 1,\\
  0,&  -0.5< x< 0.5,\\
  -4x^2-4x-1,& -1\le x\le -0.5.\\
  \end{array}
  \right.
$$

We use the above described discretization with $N=20$. The PGS algorithm
produces the result given in Figure~\ref{Fig-ProblemMP1}, and the
error between numerical and exact solution (after 10 and 20
iterations) is represented in Figure~\ref{Fig-ProblemMP1Err}.
    \begin{figure}[ht]
    \begin{minipage}[b]{0.49\linewidth}
    \begin{center}
    \includegraphics[scale=0.6]{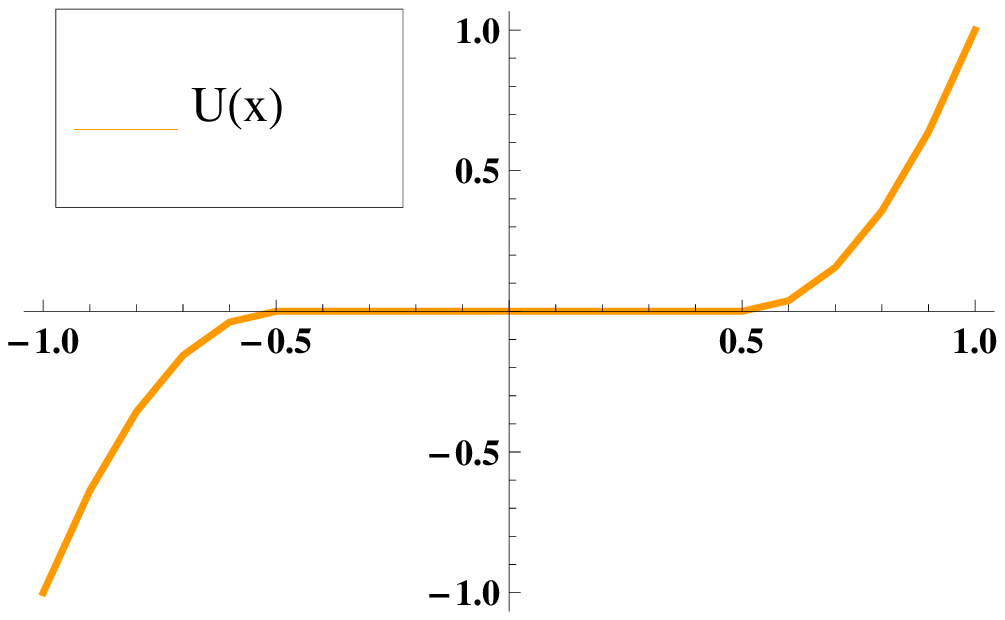}
    \end{center}
    \caption{Numerical Solution} \label{Fig-ProblemMP1}
    \end{minipage}
    \hspace{0.0cm}
    \begin{minipage}[b]{0.49\linewidth}
    \begin{center}
    \includegraphics[scale=0.6]{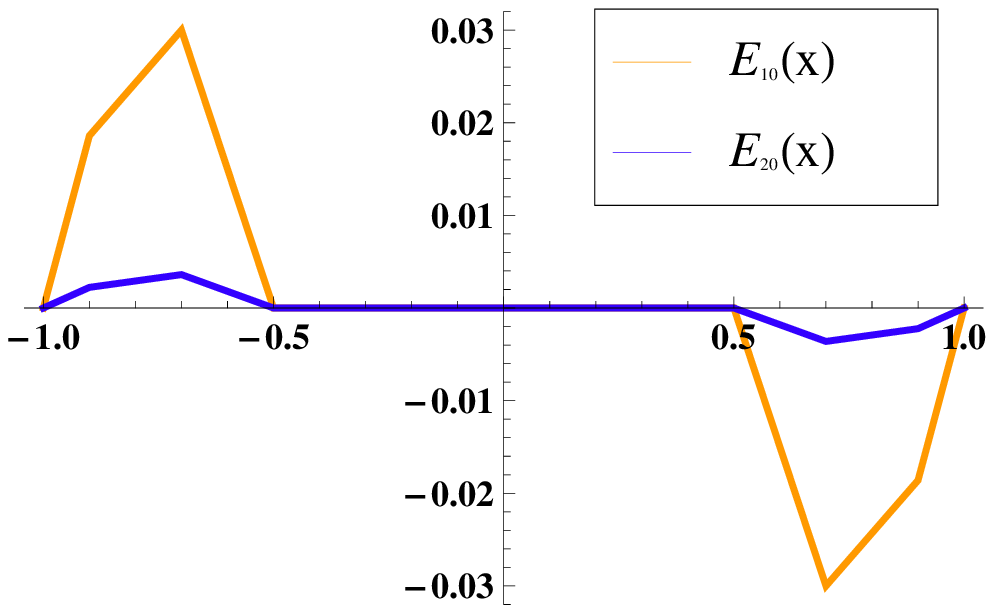}
    \end{center}
    \caption{Error between the exact and numerical solutions} \label{Fig-ProblemMP1Err}
    \end{minipage}
    \end{figure}

Next, the table \ref{tb1} we shows maximal errors between the exact and numerical solutions for this
example for different numbers of discretization points and iterations ($R_{N,M}$ is the maximal error while using $N$ discretization points and $M$ iterations). It is clearly visible that the error decrases along with the increase of $N$ and $M$.

\begin{center}
\begin{table}
    \caption{Error between the exact and numerical solutions}
    \label{tb1}
    \begin{tabular}{|c|c|c|c|c|c|}
    \hline
    &$N=20$ & $N=65$ & $N=120$ & $N=175$ & $N=230$\\
    \hline
    $R_{N, 2\times N}$&$0.0668629$ & $0.00236045$ & $0.005283$ & $0.0179411$ & $0.0347648$\\
    \hline
    $R_{N, 4\times N}$&$0.0668629$ & $0.00229779$ & $0.000577501$ & $0.00137638$ & $0.00445856$\\
    \hline
    $R_{N, 6\times N}$&$0.0668629$ & $0.0022977$ & $0.000556582$ & $0.000227299$ & $0.000658859$\\
    \hline
    $R_{N, 8\times N}$&$0.0668629$ & $0.0022977$ & $0.000556051$ & $0.000203333$ & $0.000134995$\\
    \hline
    $R_{N, 10\times N}$&$0.0668629$ & $0.0022977$ & $0.000556037$ & $0.00020249$ & $0.0000872308$\\
    \hline
    \end{tabular}
\end{table}
\end{center}

\paragraph{Example 2}  The second example is the following 2D two-phase problem:
\[
    \left\{
    \begin{array}{ll}
    \Delta u = 2\cdot\chi_{\{u>0\}}-2\cdot\chi_{\{u<0\}}, & (x,y)\in (-1,1)^2\\
    u(-1,y)=\left(\frac{1 - y}{2}\right)^2, \quad u(1,y)=\left(\frac{1 - y}{2}\right)^2,& y\in [-1,1]\\
    u(x,-1)=-x|x|,\quad u(x,1)=0, & x\in [-1,1].
    \end{array}
    \right.
\]
    \begin{figure}[ht]
    \begin{minipage}[b]{0.49\linewidth}
    \begin{center}
    \includegraphics[scale=0.59]{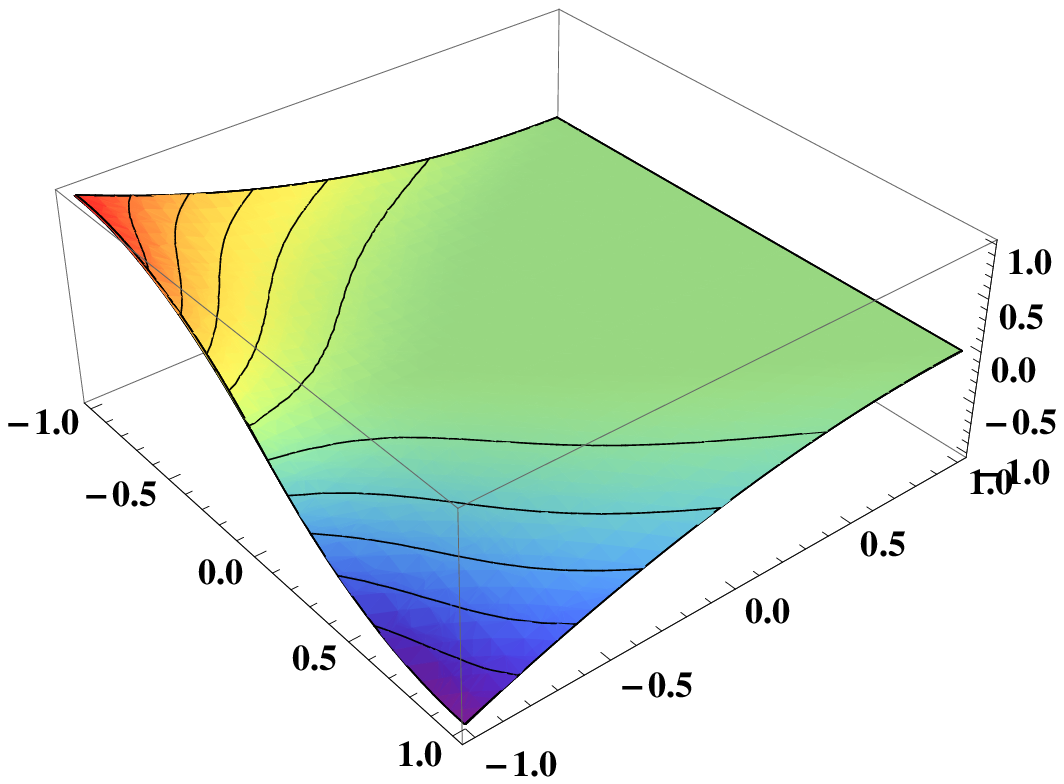}
    \end{center}
    \caption{Numerical Solution} \label{Fig-ProblemMP2}
    \end{minipage}
     \hspace{0.0cm}
    \begin{minipage}[b]{0.49\linewidth}
    \begin{center}
    \includegraphics[scale=0.45]{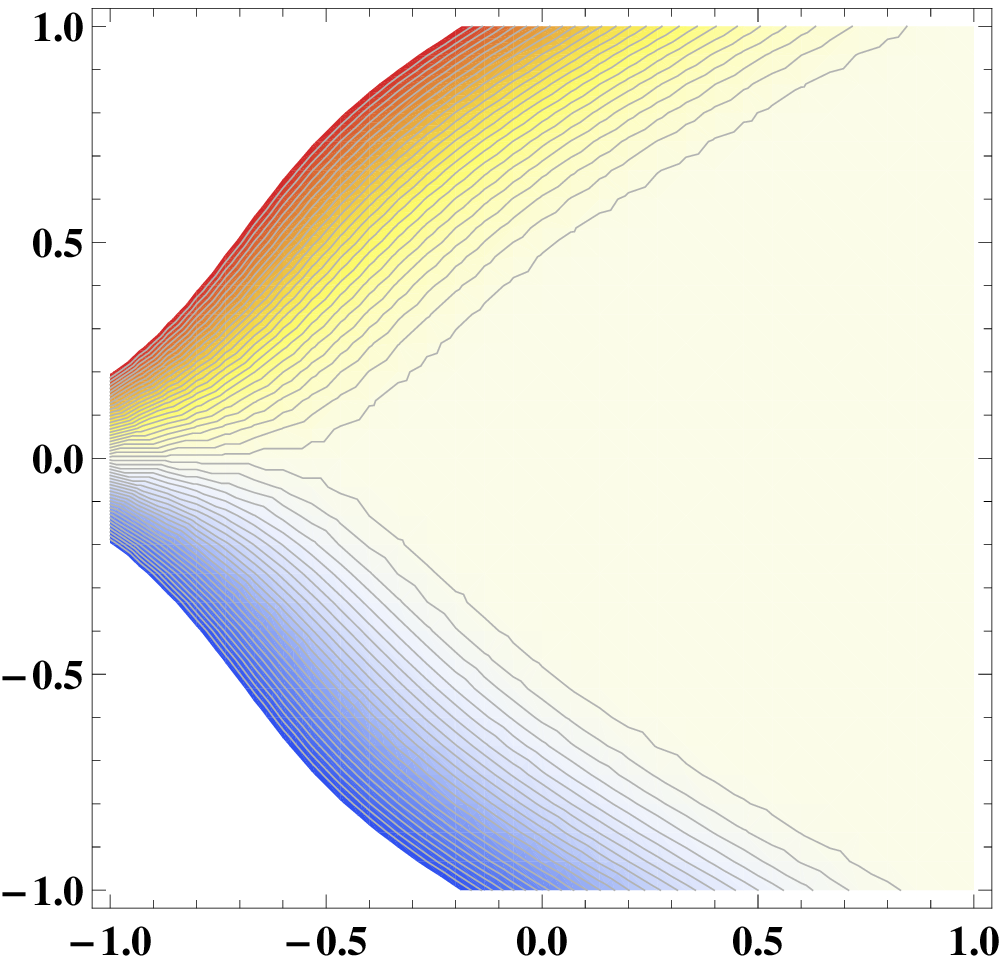}
    \end{center}
    \caption{Level sets} \label{Fig-ProblemMP2_level}
    \end{minipage}
    \end{figure}

    The numerical algorithm produces the result given in Figure
    \ref{Fig-ProblemMP2}:  the surface is the solution for our problem. Figure \ref{Fig-ProblemMP2}
    was constructed with 100 discretization  points and 400 iterations. The free boundary is clearly
    visible in Figure \ref{Fig-ProblemMP2_level} (the bell-shaped boundary of the
    white region, the zero-level set).

%
%
%

It is important to mention that in Figure \ref{Fig-ProblemMP2_level}  the tangential touch of two branches of the free boundary is clearly visible.

\bibliographystyle{model1-num-names}
\bibliography{twophase}

\end{document}